\theoremstyle{plain}
\newtheorem{theorem}{Theorem}[section]
\newtheorem{proposition}[theorem]{Proposition}
\newtheorem{lemma}[theorem]{Lemma}
\newtheorem{corollary}[theorem]{Corollary}
\numberwithin{equation}{section}
\theoremstyle{definition}
\newtheorem{definition}[theorem]{Definition}
\newtheorem{remark}[theorem]{Remark}
\newtheorem{example}[theorem]{Example}
\newcommand{\R}{\mathbb{R}}
\newcommand{\Z}{\mathbb{Z}}
\newcommand{\CP}{\mathbb{C}P}
\newcommand{\cE}{\mathcal{E}}
\renewcommand{\a}{\mathbf{a}}
\renewcommand{\b}{\mathbf{b}}
\renewcommand{\c}{\mathbf{c}}
\renewcommand{\v}{\mathbf{v}}
\newcommand{\w}{\mathbf{w}}
\renewcommand{\1}{\mathbf{1}}
\newcommand{\balpha}{\boldsymbol{\alpha}}
\newcommand{\bbeta}{\boldsymbol{\beta}}
\DeclareMathOperator{\wed}{wed}
\DeclareMathOperator{\link}{Lk}
\DeclareMathOperator{\proj}{Proj}
\DeclareMathOperator{\supp}{supp}
\DeclareMathOperator{\inv}{inv}
    \tikzstyle{v}=[circle, draw, solid, fill, inner sep=0pt, minimum width=4pt]
\begin{document}
\title[Small covers over wedges of polygons]{Small covers over wedges of polygons}

\author[S.Choi]{Suyoung Choi}
\address{Department of Mathematics, Ajou University, 206, World cup-ro, Yeongtong-gu, Suwon, 443-749, Republic of Korea}
\email{schoi@ajou.ac.kr}

\author[H.Park]{Hanchul Park}
\address{School of Mathematics, Korea Institute for Advanced Study (KIAS), 85 Hoegiro Dongdaemun-gu, Seoul 130-722, Republic of Korea}
\email{hpark@kias.re.kr}

\thanks{The first author was supported by Basic Science Research Program through the National Research Foundation of Korea(NRF) funded by the Ministry of Science, ICT \& Future Planning(NRF-2016R1D1A1A09917654).}

\date{\today}

\subjclass[2010]{14M25, 57S25, 52B11, 13F55, 18A10}

\keywords{puzzle, real toric variety, simplicial wedge, small cover, real toric manifold}

\begin{abstract}
    A small cover is a closed smooth manifold of dimension $n$ having a locally standard $\Z_2^n$-action whose orbit space is isomorphic to a simple polytope. A typical example of small covers is a real projective toric manifold (or, simply, a real toric manifold), that is, a real locus of projective toric manifold. In the paper, we classify small covers and real toric manifolds whose orbit space is isomorphic to the dual of the simplicial complex obtainable by a sequence of wedgings from a polygon, using a systematic combinatorial method finding toric spaces called puzzles.
\end{abstract}

\maketitle

\tableofcontents
\section{Introduction}
    One of basic objects in toric topology is the \emph{small cover} which is an $n$-dimensional closed smooth manifold with a locally standard $\Z_2^n$-action whose orbit space is a simple convex polytope.
    It was firstly introduced in \cite{Davis-Januszkiewicz1991} as a generalization of real projective toric variety; when $X$ is a toric variety of complex dimension $n$, there is a canonical involution on $X$ and its fixed points set forms a real subvariety $X^\R$ of real dimension $n$, called a \emph{real toric variety}. It should be noted that $X^\R$ admits a $\Z_2^n$-action induced from the action of torus $T^n = (S^1)^n$ on $X$, and its orbit space is equal to $X/T^n$.
    A non-singular complete toric variety $X$ is called a \emph{toric manifold}, and the corresponding real toric variety $X^\R$ is called a \emph{real toric manifold}. Hence, if a toric manifold $X$ is projective, then its orbit space is a convex polytope, and hence $X^\R$ is a small cover.
    For an $n$-dimensional small cover $M$, the boundary complex $K$ of its orbit space $M/{\Z_2^n}=:P$ is a polytopal simplicial complex of dimension $n-1$. We simply say that $M$ is a small cover over $K$ or a small cover over $P$.

    Two small covers are said to be \emph{Davis-Januskiewicz equivalent} (simply, \emph{D-J equivalent}) if there is a weakly equivariant homeomorphism between them which covers the identity map of their orbit spaces.
    It is known by \cite{Davis-Januszkiewicz1991} that small covers are classified by a pair of a polytopal simplicial complex $K$ and a $\Z_2$-characteristic map $\lambda$ over $K$.
    The definition of $\Z_2$-characteristic map will be given in the next section.
    One fundamental problem in toric topology is to classify small covers as well as (real) toric manifolds up to D-J equivalence. See \cite{Garrison-Scott2003, Cai-Cehn-Lu2007, Choi2008, Wang-Chen2012, Choi-Park2016}.

    However, the class of small covers (up to D-J equivalence) is too huge to be classified.
    Hence, we have to restrict our attention to a smaller but interesting subclass of manifolds.
    We denote by $P_m$ the boundary complex of the regular $m$-gon, and by $(\partial I^n)^\ast$ the boundary complex of the $n$-dimensional cube.
    Two remarkable classes of polytopal simplicial complexes are ones obtainable by a sequence of wedges (definition will be given in Section 2) from either (i) $(\partial I^n)^\ast$ or (ii) $P_m$. A simplicial complex in the class (ii) is called a \emph{wedged polygon}, and it is denoted by $P_m(J)$ for some positive integer $m$-tuple $J \in \Z_+^m$.
    The reason why such classes are interesting came from the classical results for toric manifolds.
    Up to present, there are complete classifications of toric manifolds up to Picard number $3$ (\cite{Kleinschmidt1988,Batyrev1991,Choi-Park2016}).
    Interestingly, if $X$ is a toric manifold of Picard number $\leq 3$, then the boundary complex $K$ of its orbit space should be in the class (i) or (ii).
    In general, if the complex dimension of $X$ is $n$ and its Picard number is $m-n$, then $K$ is $n-1$ dimensional star-shaped complex having $m$ vertices. Therefore, if $m-n=2$, then $K$ must be the boundary complex of the product of two simplices \cite{Ziegler1995book} which can be obtained by a sequence of wedges from $P_4= (\partial I^2)^\ast$.
    Furthermore, it is shown in \cite{Gretenkort-Kleinschmidt1990} that if $m-n=3$ and $K$ supports a toric manifold, then $K$ is obtainable by a sequence of wedges from either $P_5$ or $(\partial I^3)^\ast$.
    Therefore, it is reasonable to focus on toric spaces whose orbit space has the boudary complex belonging in the class (i) or (ii).

    A study of toric manifolds and small covers over $K$ in the class (i) is well established.
    Note that a simplicial complex $(\partial I^n)^\ast(J)$ in the class (i) is the boundary complex of the product of simplices.
    A toric manifold over $(\partial I^n)^\ast(J)$ is known as a generalized Bott manifold, and its real part is called a generalized real Bott manifold.
    In \cite{Choi-Masuda-Suh2010}, it has been shown that every small cover over $(\partial I^n)^\ast(J)$ is indeed a generalized real Bott manifold and they have been classified up to D-J equivalence in terms of block matrices.

    Nevertheless, for the class (ii), a study of toric manifolds and small covers over $P_m(J)$ has not been established for long times except for a wedged pentagon $P_5(J)$ which is in the Picard number $3$ case.
    Kleinschmidt and Sturmfels \cite{Kleinschmidt-Sturmfels1991} showed that every toric manifold over $P_5(J)$ is projective, and, by using this fact, Batyrev \cite{Batyrev1991} classified toric manifolds over $P_5(J)$.

    Very recently, the authors in \cite{Choi-Park2016, CP_wedge_2} found a new way how to find all characteristic maps over a wedge of $K$ from information of $K$, and, using this new technique, in \cite{CP_CP_variety}, they have classified all toric manifolds over $P_m(J)$  in the class (ii) and showed that all toric manifolds over $P_m(J)$  are projective. This generalizes both the main results of \cite{Kleinschmidt-Sturmfels1991} and \cite{Batyrev1991}. Furthermore, they could count the number of the small covers (and real toric manifolds) over $P_5(J)$ in \cite{Choi-Park2016}.
    Hence, as the next step, it is natural to classify small covers or real toric manifolds over $P_m(J)$, as stated in Question~6.3 of \cite{CP_wedge_2}. This is our main purpose.

    This paper is organized as follows. In Section~\ref{sec:background}, {we review some notions including simplicial wedges, characteristic maps, and diagrams and puzzles. } In Section~\ref{sec:diagram_of_P_m}, we compute the diagram of $P_m$, and, by using this, we give the classification of the small covers over $P_m(J)$ in Section~\ref{sec:classification_over_P_m(J)}. In addition, we classify real toric manifolds over $P_m(J)$ in Section~\ref{sec:real_toric_manifold_over_P_m(J)}.
    In the final section, we shall provide a summary of the paper. There, we enclose all combinatorial concepts required to understand the results, and present the main results and examples. This section would be helpful for the readers even if s/he did not read other parts of the paper carefully.

\section{Backgrounds} \label{sec:background}
    The reader is recommended to refer \cite{CP_wedge_2} for many terms of this paper, even though we will present definitions of essential notions.

    Let $K$ be a simplicial complex on the vertex set $[m] := \{1,2,\dotsc,m\}$. We say that a subset $\tau \subseteq [m]$ of the vertex set $[m]$ is a \emph{minimal non-face} of $K$ if $\tau$ itself is not a face of $K$, but every proper subset of $\tau$ is a face of $K$. Every simplicial complex $K$ is determined by describing its minimal non-faces.

    \begin{definition}[\cite{BBCG2015}]
        Let $J = (j_1,\dotsc, j_m)$ be an $m$-tuple of positive integers. The simplicial complex $K(J)$ is defined by the vertex set
        \[
            \{ \underbrace{1_1,1_2,\ldots,1_{j_1}},\underbrace{{2_1},2_2,\ldots,{2_{j_2}}},\ldots, \underbrace{{m_1},\ldots,{m_{j_m}}} \}
        \]
        and the minimal non-faces
        \[
            \{ \underbrace{{(i_1)_1},\ldots,{(i_1)_{j_{i_1}}}},\underbrace{{(i_2)_1},\ldots,{(i_2)_{j_{i_2}}}},\ldots, \underbrace{{(i_k)_1},\ldots,{(i_k)_{j_{i_k}}}} \}
        \]
        for each minimal non-face $\{{i_1},\ldots,{i_k}\}$ of $K$.
    \end{definition}
    {In particular, for distinct vertices  $p_1,\dotsc,p_k \in [m]$ of $K$, denote by {$J_{p_1,\dotsc,p_k}$} the $m$-tuple such that the $p_i$th entries are 2, for $i=1,\dotsc,k$, and the other entries are 1, and, then }
    we use the notation
    \[
        \wed_{p_1,\dotsc,p_k}K {:= K(J_{p_1,\dotsc,p_k})}.
    \]
    When $k=1$, the operation $\wed_{p_1}K$ is known as the \emph{(simplicial) wedge} operation, and it is easy to show that $K(J)$ can be obtained from a sequence of wedges.
    See Figure~\ref{fig:wedge} for an example of the simplicial wedge.

    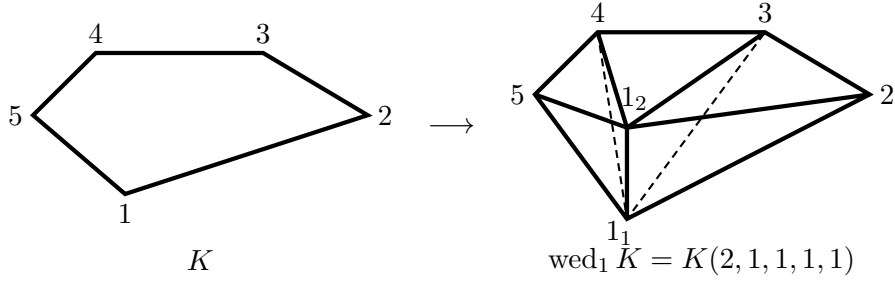
\begin{figure}[h]
        \begin{tikzpicture}[scale=.55]
            \coordinate [label=below:$1$](11) at (-9.8,0.6);
            \coordinate [label=right:$2$](22) at (-4,2.5);
            \coordinate [label=above:$3$](33) at (-6.5,4);
            \coordinate [label=above:$4$](44) at (-10.5,4);
            \coordinate [label=left:$5$](55) at (-12,2.5);
            \draw (-2,2.2) node {$\longrightarrow$};
            \coordinate [label={[xshift=-2.8pt,yshift=2.8pt]below:$1_1$}](1_1) at (2.2,0);
            \coordinate [label={[xshift=2.8pt,yshift=2.8pt]above:$1_2$}](1_2) at (2.2,2.2);
            \coordinate [label=right:$2$](2) at (8,3);
            \coordinate [label=above:$3$](3) at (5.5,4.5);
            \coordinate [label=above:$4$](4) at (1.5,4.5);
            \coordinate [label=left:$5$](5) at (0,3);
            \draw (-8,-1) node {$K$}
                (4,-1) node{$\wed_1K = K(2,1,1,1,1)$};
            \draw [ultra thick] (11)--(22)--(33)--(44)--(55)--cycle
                (1_1)--(2)--(3)--(4)--(5)--cycle
                (1_2)--(1_1)
                (1_2)--(2)
                (1_2)--(3)
                (1_2)--(4)
                (1_2)--(5);
            \draw [thick, densely dashed] (4)--(1_1)--(3);
        \end{tikzpicture}
        \caption{Illustration of a wedge of $K$}\label{fig:wedge}
    \end{figure}

    Let us further assume that $K$ is a polytopal simplicial complex, that is, $K$ is a simplicial complex isomorphic to the boundary of some simplicial polytope. Note that if $K$ is polytopal, then every $K(J)$ is also polytopal (\cite[Proposition~2.2]{Choi-Park2016}).
    \begin{definition}
        A \emph{$\Z_2$-characteristic map} over $K$, or simply a \emph{characteristic map}, is a map $\lambda \colon [m] \to \Z_2^m$ such that the following holds:
        \begin{equation}\label{eq:nonsingular}
            \text{if $\{i_1,\dotsc,i_n\}\in K$, then $\{\lambda(i_1),\dotsc,\lambda(i_n)\}$ is linearly independent.}\tag{$\ast$}
        \end{equation}
        The condition \eqref{eq:nonsingular} is known as the \emph{non-singularity condition}. Two characteristic maps $\lambda_1,\,\lambda_2 \colon [m] \to \Z_2^n$ over $K$ are called \emph{Davis-Januszkiewicz equivalent}, or \emph{D-J equivalent}, if there exists a linear isomorphism $\phi \in GL(n, \Z_2)$ such that $\lambda_2 = \phi \circ \lambda_1$. The equivalence classes themselves are called \emph{Davis-Januszkiewicz classes} or \emph{D-J classes}. 
    \end{definition}
    When $K$ is a polytopal simplicial complex of dimension $n-1$, it is a fundamental fact shown in \cite{Davis-Januszkiewicz1991} that $\Z_2$-characteristic maps over $K$ up to D-J equivalence correspond one-to-one to small covers over $K$ up to weakly $\Z_2^n$-equivariant homeomorphism fixing orbit spaces. A characteristic map $\lambda\colon [m] \to \Z_2^n$ is often represented by an $(n \times m)$-matrix called the \emph{characteristic matrix}, whose $i$th column vector corresponds to $\lambda(i)$ for $1\le i \le m$.
    Two characteristic matrices $\lambda_1$ and $\lambda_2$ correspond to two D-J equivalent characteristic maps if and only if $\lambda_2$ can be obtained from $\lambda_1$ by a series of elementary row operations.

    For a polytopal simplicial complex $K$ and a face $\sigma$ of $K$, its link $\link_K \sigma$ is also polytopal. If $K$ supports a characteristic map $\lambda$, then the link also supports a natural characteristic map over a link of $K$ called the \emph{projection} as follows.
    \begin{definition}
        For a characteristic map $\lambda \colon [m] \to \Z_2^n$ and a face $\sigma$ of $K$, the \emph{projection of $\lambda$ with respect to $\sigma$} is defined as
        \[
            (\proj_\sigma \lambda )(w) = [\lambda(w)]\in \Z_2^n/\langle \lambda(v)\mid v\in\sigma\rangle \cong \Z_2^{n - |\sigma|}
        \]
        when $w$ is a vertex of $\link_K \sigma$. The projection is well defined up to D-J equivalence. When $\sigma  = \{v\}$ is a vertex, one can simply write $\proj_\sigma\lambda = \proj_v \lambda$.
    \end{definition}
    \begin{remark}
    Note that a link of $K$ corresponds to a face of its dual simple polytope. Furthermore, the projection is the characteristic map for a submanifold fixed by a subtorus. When $\sigma$ is a vertex, then the submanifold is known as a \emph{characteristic submanifold}.
    \end{remark}
    \begin{remark}
        Throughout this paper, we will not necessarily distinguish the concept of characteristic maps and D-J classes unless otherwise mentioned. In actual calculation, we will mainly deal with characteristic maps while freely using elementary row operations. Also observe that the projection is actually an operation of D-J classes.
    \end{remark}

    Let $P_m$ be the boundary complex of the regular $m$-gon for $m\ge 3$.
    We call $P_m(J)$ a \emph{wedged polygon}. Our objective of the current paper is to figure out the D-J classes over $P_m(J)$. For this objective, let us review the definition of the diagram $D(K)$ of a polytopal simplicial complex $K$. Once $D(K)$ is calculated, one can construct every characteristic map over $K(J)$ for any $J$ due to \cite{CP_wedge_2}.

    \begin{definition}
    Let $K$ be a polytopal simplicial complex and define $V,E$, and $S$ as follows.
    \begin{enumerate}
        \item $V$ is the set of the D-J classes over $K$.
        \item $E$ is the set of the D-J classes over $\wed_pK$ where $p$ is a vertex of $K$.
        \item $S$ is the set of the D-J classes over $\wed_{p,q}K$ where $p$ and $q$ are distinct vertices of $K$.
    \end{enumerate}
    The triple $(V,E,S)$ is called the \emph{diagram} of $K$ and is denoted by $D(K)$. The elements of $V,E$, and $S$ are called the \emph{nodes}, \emph{edges}, and \emph{realizable squares} of $D(K)$ respectively.
    \end{definition}


    To any edge $\Lambda$ of $D(K)$, one can correspond the set $\{\lambda_1,\lambda_2,p\}$ where $\lambda_1, \lambda_2 \in V$ are two D-J classes over $K$ which are the two projections of $\Lambda$, namely, $\lambda_1 = \proj_{p_2}\Lambda$ and $\lambda_2 = \proj_{p_1}\Lambda$. This can be regarded as an edge connecting $\lambda_1$ and $ \lambda_2$ which is colored $p$, which looks like
     \[
        \xymatrix{
            \lambda_1 \ar@{-}[rr] ^p & & \lambda_2
        }.
    \]
    In this case, we say that $\lambda_1$ and $\lambda_2$ are \emph{$p$-adjacent}. We can abuse the set $\{\lambda_1, \lambda_2, p\}$ for the edge $\Lambda$ by the uniqueness result of \cite[Theorem~1.1]{Choi-Park2016}. That is, if both of the characteristic maps $\Lambda$ and $\Lambda'$ over $\wed_{p}K$ correspond to $\{\lambda_1, \lambda_2, p\}$, then either they are D-J equivalent or they are the ``twins'' of each other (see Remark~\ref{rem:twin}). An edge $\{\lambda_1,\lambda_2,p\}$ is said to be \emph{trivial} if $\lambda_1 = \lambda_2$. The edge set $E$ contains every possible trivial edge. That is, for every D-J class $\lambda \in V$ and every vertex $p \in K$, $\{\lambda,p\} \in E$.

    Like edges, for any realizable square $\Lambda$, we have the four projections $\lambda_1,\lambda_2,\lambda_3$, and $\lambda_4$ onto $K$, so that
    \begin{align*}
        \lambda_1 &= \proj_{\{p_2,q_2\}}\Lambda, \\
        \lambda_2 &= \proj_{\{p_1,q_2\}}\Lambda, \\
        \lambda_3 &= \proj_{\{p_2,q_1\}}\Lambda, \hbox{ and} \\
        \lambda_4 &= \proj_{\{p_1,q_1\}}\Lambda, \\
    \end{align*}
    and the four edges
    \begin{equation}\label{eq:square}
        \xymatrix{
                \lambda_{1} \ar@{-}[rr]^p \ar@{-}[dd]^q  & & \lambda_{2} \ar@{-}[dd]^q \\
                  &   &  \\
                \lambda_{3}\ar@{-}[rr]^p &   & \lambda_{4} }.
    \end{equation}
    We call a figure of the form \eqref{eq:square} a \emph{square}.

    \begin{remark}\label{rem:twin}
        For a D-J class $\Lambda$ over $\wed_pK$ such that $\lambda_1 = \proj_{p_2}\Lambda$ and $\lambda_2 = \proj_{p_1}\Lambda$, there surely exists a D-J class $\Xi$ over $\wed_pK$ such that $\lambda_1 = \proj_{p_1}\Xi$ and $\lambda_2 = \proj_{p_2}\Xi$, because of symmetry of the simplicial wedge. That means, every edge of $D(J)$ has its ``twin''. The twins must be distinguished when counting D-J classes over $\wed_pK$, but in most other situations they need not be distinguished. A similar argument goes for four different realizable squares over $\wed_{p,q}K$ corresponding to the same square up to symmetry.
    \end{remark}

    Here, we introduce a general method how to find D-J classes over $K(J)$ using the given diagram $D(K)$ referring to \cite{CP_wedge_2}. Let $K$ be a polytopal simplicial complex. Let $J=(j_1, \ldots, j_m) \in \Z_+^m$ be an {$m$-tuple of positive integers}.
    We consider an edge-colored graph $G(J)$ with $m$ colors constructed as follows: $G = G(J)$ is the graph determined by the $1$-skeleton of the simple polytope $\Delta^{j_1-1} \times \Delta^{j_2-1} \times \cdots \times \Delta^{j_m-1}$, where $\Delta^j$ is the $j$-dimensional simplex.
    One remarks that each edge $e$ of $G$ can be uniquely written as
    \[
        {e=} p_1 \times p_2 \times \dotsb \times p_{v-1}\times e_v \times p_{v+1} \times \dotsb \times p_m,
    \]
    where $p_i$ is a vertex of $\Delta^{j_i-1}$, $1\le i \le m$, $i\ne v$, and $e_v$ is an edge of $\Delta^{j_v-1}$.
    {Then} we color $v\in[m]$ on the edge $e$. Let us call a subgraph of $G(J)$ a \emph{subsquare} if it comes from a 2-face of $\Delta^{j_1-1} \times \Delta^{j_2-1} \times \cdots \times \Delta^{j_m-1}$ {which has $4$ edges}.
    Two edges
    \begin{align*}
       e &= p_1 \times \dotsb \times p_{v-1}\times e_v \times p_{v+1} \times \dotsb \times p_m \quad \text{ and } \\
       e' &= p'_1 \times \dotsb \times p'_{v-1}\times e'_v \times p'_{v+1} \times \dotsb \times p'_m
    \end{align*}of $G(J)$ are said to be \emph{parallel} if $e_v = e'_v$. Every parallel edge of $G(J)$ has the same color.

    Observe that the diagram $D(K)$ whose realizable squares are forgotten can be regarded as a pseudograph whose edges are colored.
    \begin{definition}
        A \emph{realizable puzzle} over $K$ is a pseudograph homomorphism $\pi\colon G(J) \to D(K)$ such that
        \begin{enumerate}
            \item $\pi$ preserves the coloring of the edges, and
            \item each image of the subsquare of $G(J)$ is a realizable square in $D(K)$.
        \end{enumerate}
    \end{definition}
    \begin{theorem}[\cite{CP_wedge_2}, Theorem~5.4]\label{thm:realizablepuzzle}
        There is a one-to-one correspondence
        \[
            \{\text{D-J classes over $K(J)$}\} \longleftrightarrow \{\text{realizable puzzles $ G(J) \to D(K)$}\}.
        \]
    \end{theorem}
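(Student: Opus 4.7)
The plan is to define a forward map $\Psi$ sending a D-J class $\Lambda$ over $K(J)$ to a realizable puzzle by iterated projection, and to construct its inverse by induction on $|J|=j_1+\dotsb+j_m$, invoking the uniqueness result of \cite[Theorem~1.1]{Choi-Park2016} that a D-J class over a single wedge is determined, up to its twin, by its two projections together with the wedged vertex.

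To define $\Psi$, observe that each vertex $\mathbf{v}=(p_1,\dotsc,p_m)$ of $G(J)$ (with $p_i$ a vertex of $\Delta^{j_i-1}$, regarded as one of the copies $\{i_1,\dotsc,i_{j_i}\}$) determines a face $\sigma_{\mathbf{v}}$ of $K(J)$ consisting of all \emph{other} copies of the original vertices of $K$; one checks from the definition of $K(J)$ that $\sigma_{\mathbf{v}}$ is a face and $\link_{K(J)}\sigma_{\mathbf{v}}\cong K$, so $\proj_{\sigma_{\mathbf{v}}}\Lambda\in V$ is well defined. For an edge $e$ of $G(J)$ colored $v$, the two endpoints differ only in their $v$th entry, and the analogous projection retaining both of those copies yields an element of $E$, a D-J class over $\wed_vK$. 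For a subsquare in which colors $u,v$ vary, retaining both copies in both directions gives an element of $S$, a realizable square over $\wed_{u,v}K$. Transitivity of projection ensures $\Psi(\Lambda)$ preserves colors and sends every subsquare to a realizable square, so $\Psi(\Lambda)$ is a realizable puzzle.

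For the inverse I induct on $|J|$. The base case $J=(1,\dotsc,1)$ is immediate since $G(J)$ is a single vertex and the puzzle is simply an element of $V$. For the inductive step pick $v$ with $j_v\ge 2$ and write $K(J)=\wed_wK(J-\mathbf{e}_v)$ where $w$ is a new copy of the vertex $v$; by \cite[Theorem~1.1]{Choi-Park2016} a D-J class over $\wed_wK(J-\mathbf{e}_v)$ is determined, up to its twin, by two D-J classes over $K(J-\mathbf{e}_v)$ compatible with the wedge vertex $w$. The graph $G(J)$ contains parallel copies of $G(J-\mathbf{e}_v)$, one for each facet of its $v$th factor $\Delta^{j_v-1}$; restricting $\pi$ to two such copies gives, by induction, two D-J classes over $K(J-\mathbf{e}_v)$, and the color-$v$ edges and attached subsquares of $G(J)$ joining these copies supply precisely the compatibility data that the Choi--Park theorem needs to glue them into a D-J class over $K(J)$.

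The principal obstacle is verifying that the subsquare condition is \emph{exactly} sufficient for the inductive gluing: each subsquare of $G(J)$ in which colors $u,v$ vary records the interaction between wedges in directions $u$ and $v$, and insisting that its image be a realizable square in $D(K)$ is precisely the compatibility required for two inductively constructed characteristic maps over $\wed_uK(J')$ and $\wed_vK(J')$ to extend simultaneously to one over $\wed_{u,v}K(J')$. A secondary technical point is the twin ambiguity of Remark~\ref{rem:twin}: the inductive choices (which facet of $\Delta^{j_v-1}$ to restrict to, which copy to treat as the wedge vertex) must be made coherently so that the final characteristic map is independent of them; fixing, once and for all, a distinguished vertex in each $\Delta^{j_i-1}$ and always reducing along it resolves the ambiguity, after which $\Psi$ and the inverse map are routinely verified to be mutual inverses.
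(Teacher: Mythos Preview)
The paper does not prove this theorem: it is quoted verbatim from \cite[Theorem~5.4]{CP_wedge_2} as background material, with no argument supplied here. There is therefore no ``paper's own proof'' to compare your proposal against.

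That said, your outline is broadly the right shape and matches the strategy of the cited source: the forward map is projection onto the face complementary to each node of $G(J)$, and the inverse is built inductively on $|J|$ using the uniqueness result of \cite[Theorem~1.1]{Choi-Park2016}. Two remarks. First, your ``principal obstacle'' paragraph correctly identifies where the real work lies, but does not actually carry it out: you assert that the subsquare condition is ``precisely the compatibility required'' for the inductive gluing, yet this is the substantive claim that needs a proof, not a sentence. In \cite{CP_wedge_2} this is handled through the standard-form machinery (compare the matrices \eqref{eq:edge} here and the analogous square form), and you would need something equivalent. Second, the twin ambiguity is more delicate than ``fixing a distinguished vertex in each $\Delta^{j_i-1}$'': the coherence you need is that different orders of un-wedging give the same answer, which amounts to a commutativity statement for the gluing operations and again uses the realizable-square hypothesis in an essential way. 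As written, your proposal is a faithful high-level summary of the argument in \cite{CP_wedge_2}, but not yet a self-contained proof.
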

    We have one more useful statement as follows.
    \begin{proposition}[\cite{CP_wedge_2}, Proposition~4.3]\label{prop:standardform}
        Let $v$ be any fixed node of $G(J)$ and suppose that there are two realizable puzzles $\pi,\,\pi'\colon G(J) \to D(K)$. Then $\pi = \pi'$ if and only if $\pi(v) = \pi'(v)$ and $\pi(w) = \pi'(w)$ for every node $w$ of $G(J)$ which is adjacent to $v$.
    \end{proposition}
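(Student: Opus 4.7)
The ``only if'' direction is immediate, so the content lies in the converse. My plan is to induct on the graph-theoretic distance $d=d_{G(J)}(v,w)$. Since $G(J)$ is the $1$-skeleton of $\Delta^{j_1-1}\times\cdots\times\Delta^{j_m-1}$, two nodes are adjacent precisely when their coordinate tuples differ in exactly one position, so $d(v,w)$ equals the Hamming distance between the tuples. The base cases $d=0$ and $d=1$ are exactly the hypotheses.

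For the inductive step, fix $w$ at distance $d\ge 2$ from $v$, let $I\subseteq[m]$ be the set of coordinates where $w$ and $v$ disagree, and pick any two distinct $p,q\in I$ (possible since $d\ge 2$; note $j_p,j_q\ge 2$ automatically). Define $u_p$, $u_q$, and $u_{p,q}$ to be the nodes obtained from $w$ by resetting to $v$ the $p$-th coordinate, the $q$-th coordinate, or both, respectively. These lie at distances $d-1$, $d-1$, $d-2$ from $v$, and the four nodes $\{u_{p,q},u_p,u_q,w\}$ constitute a subsquare of $G(J)$ (the $2$-face of the product spanned by an edge of $\Delta^{j_p-1}$ and an edge of $\Delta^{j_q-1}$, with other coordinates fixed). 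By the strong inductive hypothesis $\pi$ and $\pi'$ already agree on $u_{p,q},u_p,u_q$, and by the realizability condition they send this subsquare to realizable squares in $D(K)$ sharing those three corners.

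It remains to show $\pi(w)=\pi'(w)$, which reduces to the following key assertion: a realizable square in $D(K)$ is determined by any three of its four corners. Granting this closes the induction. To prove the three-corner rigidity, I would invoke the edge uniqueness Theorem~1.1 of \cite{Choi-Park2016}: each edge $\{\lambda_1,\lambda_2,p\}$ of $D(K)$ corresponds to a D-J class over $\wed_p K$ that is unique up to twin, so the three known corners pin down the two edges issuing from $\pi(u_{p,q})$ up to twin, after which the realizability (a single D-J class over $\wed_{p,q}K$, not merely edgewise data) forces the fourth corner. The delicate step -- which I expect to absorb most of the technical effort -- is ruling out that two genuinely distinct realizable squares share three corners; this is precisely where the $2$-face realizability condition is used in an essential way, beyond edge-by-edge compatibility.
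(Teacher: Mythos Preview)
The paper does not prove this proposition itself: it is quoted from \cite{CP_wedge_2}. What the paper does supply, immediately following the statement, is the remark that the ``essential form'' of the proposition is the three-corner rigidity of realizable squares --- that $\lambda_4$ is uniquely determined by $\lambda_1,\lambda_2,\lambda_3$. Your induction on the graph distance in $G(J)$ is the correct and standard way to reduce the full statement to this rigidity, and it matches exactly what the paper singles out as the crux.

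For the rigidity step you flag as delicate, your route via edge uniqueness and twins is workable but slightly roundabout. The cleaner mechanism, used in \cite{CP_wedge_2} and rehearsed in this paper when computing the squares of $D(P_m)$, is the \emph{standard form for a square}: any D-J class $\Lambda$ over $\wed_{p,q}K$ can be written as an explicit $(n{+}2)\times(m{+}2)$ matrix whose unknown entries $e_i,f_i$ are completely fixed once the three projections $\lambda_1,\lambda_2,\lambda_3$ are given; hence $\Lambda$ itself is unique, and $\lambda_4=\proj_{\{p_1,q_1\}}\Lambda$ falls out as its fourth projection. No separate twin analysis is needed.
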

    The essential form of the above statement is for the realizable squares. That is, if a square of the form \eqref{eq:square} is realizable, then $\lambda_4$ is uniquely determined by $\lambda_1$, $\lambda_2$, and $\lambda_3$. If this holds, we say that the two edges $\{\lambda_1,\lambda_2,p\}$ and $\{\lambda_1,\lambda_3,q\}$ \emph{spans a realizable square}.

    According to Remark~5.7 of \cite{CP_wedge_2}, for any edge $\{\lambda_1,\lambda_2,p\}$ and any vertex $q \neq p$, the square of the form
    \[
        \xymatrix{
                \lambda_{1} \ar@{-}[rr]^p \ar@{-}[dd]^q  & & \lambda_{2} \ar@{-}[dd]^q \\
                  &   &  \\
                \lambda_{1}\ar@{-}[rr]^p &   & \lambda_{2} }
    \]
    is always realizable. We say the realizable square is \emph{reducible}. Realizable squares which are not reducible are called \emph{irreducible}.

    \section{The diagram $D(P_m)$} \label{sec:diagram_of_P_m}
    According to Theorem~\ref{thm:realizablepuzzle}, the diagram $D(K)$ can be used to construct every D-J class over $K(J)$. Therefore, our objective here is to provide the diagram $D(P_m)$. Throughout the paper, we identify the vertex set of $P_m$ with $[m]$ in counterclockwise order.
    \subsection{Node of $D(P_m)$}
    We begin by calculating the node set $V$ of $D(P_m)$.
    Let us label the vectors $\binom10,\binom01,\binom11 \in \Z_2^2$ by  $\a,\b,$ and $\c$, respectively. Then one can regard a characteristic map $\lambda\colon [m]\to \Z_2^2$ as a finite circular sequence consisting of $\a,\b$ and $\c$, no letter in which appears consecutively. Furthermore, any cardinality two subset of $\{\a,\b,\c\}$ is a basis of $\Z_2^2$ and the other element is the sum of the two, and hence any permutation of $\a , \b,$ and $\c$ does not affect the D-J type of the characteristic map $\lambda$.

    \begin{proposition}
        Let $t_m$ be the number of D-J classes over $P_m$. Then,
        $$t_m = \frac{2^{m-1} - (-1)^{m-1}}{3}.$$
    \end{proposition}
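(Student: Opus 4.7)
The plan is to translate the problem into counting proper $3$-colorings of the cycle $C_m$ modulo the color-permutation action of $S_3$, and then apply Burnside's lemma. First I would formalize the observation made in the paragraph preceding the proposition: since every vertex $i$ of $P_m$ must be sent to one of the three nonzero elements $\a,\b,\c$ of $\Z_2^2$, and since two such elements are linearly independent if and only if they are distinct, the non-singularity condition \eqref{eq:nonsingular} for the edges of $P_m$ is exactly the condition that adjacent vertices of the $m$-gon receive distinct letters. Thus characteristic maps over $P_m$ are in bijection with proper $3$-colorings of $C_m$. The group $GL(2,\Z_2)$ is isomorphic to $S_3$ and acts on $\{\a,\b,\c\}$ by all permutations, so D-J classes correspond to $S_3$-orbits of proper $3$-colorings of $C_m$.

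Next I would count the total number of proper $3$-colorings of $C_m$. Using the standard chromatic polynomial of the cycle, $P_{C_m}(k)=(k-1)^m+(-1)^m(k-1)$, evaluated at $k=3$, this number is $2^m+2(-1)^m$. If preferred, I would prove this by the transfer matrix method or by a one-line deletion-contraction recursion on the cycle, whichever is shortest to present.

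Then I would compute the fixed-point counts for the $S_3$-action. A sequence $(\ell_1,\dotsc,\ell_m)\in\{\a,\b,\c\}^m$ is fixed by $\sigma\in S_3$ exactly when each $\ell_i$ lies in the fixed-letter set of $\sigma$. For a transposition this fixed set has size $1$, and for a $3$-cycle it is empty; in either case the only candidate fixed sequences would be constant, which are not proper colorings for $m\ge 3$. Hence every non-identity element of $S_3$ has no fixed proper colorings, and Burnside's lemma yields
\[
t_m \;=\; \frac{1}{|S_3|}\bigl(2^m+2(-1)^m\bigr) \;=\; \frac{2^{m-1}+(-1)^m}{3} \;=\; \frac{2^{m-1}-(-1)^{m-1}}{3}.
\]

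I do not expect any genuine obstacle; the only delicate step is verifying that no non-identity element of $S_3$ stabilizes any proper coloring, which is where the hypothesis $m\ge 3$ enters (for $m=2$ there would be alternating two-colored sequences, but $P_m$ is defined only for $m\ge 3$). Small-case checks are consistent: $t_3=1$, $t_4=3$, $t_5=5$, matching the expected values from the formula.
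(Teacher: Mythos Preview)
Your proof is correct and takes a genuinely different route from the paper. The paper normalizes by fixing $\lambda(1)=\a$ and $\lambda(2)=\b$ (implicitly using that $S_3$ acts simply transitively on ordered pairs of distinct letters, so each D-J class has a unique such representative) and then derives the linear recursion $t_m=t_{m-1}+2t_{m-2}$ by casework on $\lambda(m-1)$, leaving the closed form to be verified against the recursion and the initial values $t_3=1$, $t_4=3$. You instead identify characteristic maps with proper $3$-colorings of $C_m$, invoke the chromatic polynomial $2^m+2(-1)^m$, and observe via Burnside that the $S_3$-action on proper colorings is free, so $t_m$ drops out immediately upon dividing by~$6$. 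The paper's argument is entirely self-contained and elementary, while yours reaches the closed form in one stroke and makes the link to standard graph-coloring enumeration explicit. One minor remark: your parenthetical about $m=2$ is not quite accurate---even there no non-identity permutation would fix a proper coloring as a \emph{sequence}---but this is irrelevant since $m\ge 3$ throughout and the main argument stands as written.
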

    \begin{proof}
      We may assume that $\lambda(1) = \a$ and $\lambda(2)=\b$. If $\lambda(m-1)=\a$, then the number of possible cases for $\lambda(3),\ldots, \lambda(m-2)$ is equal to $t_{m-2}$ and $\lambda(m)$ is either $\b$ or $\c$. So total number is $2t_{m-2}$. If $\lambda(m-1) \neq \a$, then $\lambda(m)$ is determined uniquely, and the number of possible cases for $\lambda(3),\ldots, \lambda(m-1)$ is equal to $t_{m-1}$. Therefore, $t_m = t_{m-1} + 2 t_{m-2}$, where $t_3= 1$ and $t_4=3$.
    \end{proof}

    It should be observed that a D-J class over $P_m$ can be regarded as a special kind of partitions of $[m]$: for each $\lambda$ (up to D-J equivalence), $[m]$ is divided into at most three disjoint subsets $\lambda^{-1}(\a)$, $\lambda^{-1}(\b)$, and $\lambda^{-1}(\c)$. Using this observation, we can give an alternative definition of the D-J class over $P_m$. A subset $I$ of $[m]$ is called \emph{non-consecutive} if $\{ p, p+1 \mod m\}$ are not contained in $I$ for any $p \in [m]$.
%

    \begin{definition}
        The set $\{ \mu_\a , \mu_\b, \mu_\c \}$ is a \emph{D-J class} over $P_m$ if it satisfies the following:
        \begin{enumerate}
          \item $\{ \mu_\a , \mu_\b, \mu_\c \}$ is a weak partition of $[m]$, that is, it is a partition of $[m]$ and one of $\mu_i$ could be empty.
          \item all of $\mu_\a , \mu_\b,$ and $\mu_\c$ are non-consecutive.
        \end{enumerate}
    \end{definition}


    \subsection{Edge of $D(P_m)$}

    The next step is for the edges of $D(P_m)$.

    \begin{definition}
        For a characteristic map $\lambda$ over $P_m$ and $p\in[m]$, the \emph{support} of $p$ for $\lambda$, denoted by $\supp_\lambda p$, is $\supp_\lambda p = \lambda^{-1}(\lambda(p))$. Since $\supp_\lambda p =\supp_{\lambda'} p$ whenever $\lambda $ and $\lambda'$ are D-J equivalent, the support $\supp_\lambda p$ is well defined for a D-J class $\lambda$.
    \end{definition}

    \begin{example}
        The characteristic map $\lambda$ over $P_7$ represented by
        \[
            \begin{pmatrix}
                1 & 0 & 1 & 0 & 1 & 1 & 0 \\
                0 & 1 & 1 & 1 & 0 & 1 & 1
            \end{pmatrix},
        \]
        or simply written as $\a\b\c\b\a\c\b$, is D-J equivalent to $\a\c\b\c\a\b\c$, and they correspond to the D-J class $\{\{1,5\},\{2,4,7\},\{3,6\}\}$ or simply $\{15,247,36\}$. The support of $4$ for $\lambda$ is $\supp_\lambda 4 = \{2,4,7\}$.
    \end{example}

    \begin{lemma}\label{lem:edge}
        Two D-J classes     $\lambda_1$ and $\lambda_2$ over $P_m$ are $p$-adjacent if and only if $\supp_{\lambda_1} p = \supp_{\lambda_2} p$.
    \end{lemma}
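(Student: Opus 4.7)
The plan is to work directly with a characteristic map $\Lambda\colon[m+1]\to\Z_2^3$ over the $2$-dimensional complex $\wed_p P_m$ that realizes the edge, and to translate $p$-adjacency into concrete linear conditions via the two projections $\proj_{p_1}\Lambda$ and $\proj_{p_2}\Lambda$. The combinatorial backbone is that the minimal non-faces of $\wed_p P_m$ are the pair non-faces $\{i,j\}\subset[m]\setminus\{p\}$ (non-adjacent in $P_m$) together with the triples $\{p_1,p_2,j\}$ for $j$ non-adjacent to $p$; in particular $\{p_1,i\}$ and $\{p_2,i\}$ are edges for every remaining vertex $i$.

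For the forward direction, the above observation combined with non-singularity of $\Lambda$ forces $\Lambda(p_1)$, $\Lambda(p_2)$, and $\Lambda(p_1)+\Lambda(p_2)$ to be three distinct nonzero vectors, and further forces $\Lambda(i)\ne\Lambda(p_1)$ and $\Lambda(i)\ne\Lambda(p_2)$ for every $i\in[m]\setminus\{p\}$. For such $i$, the condition $\lambda_1(i)=\lambda_1(p)$ unravels to $\Lambda(i)\equiv\Lambda(p_1)\pmod{\Lambda(p_2)}$, i.e., $\Lambda(i)\in\{\Lambda(p_1),\,\Lambda(p_1)+\Lambda(p_2)\}$; the first option is ruled out, leaving $\Lambda(i)=\Lambda(p_1)+\Lambda(p_2)$. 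The symmetric calculation shows $i\in\supp_{\lambda_2}p$ is equivalent to the same condition, whence $\supp_{\lambda_1}p=\supp_{\lambda_2}p$.

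For the converse, setting $\mu_\a=\supp_{\lambda_1}p=\supp_{\lambda_2}p$, I would pick D-J representatives $\bar\lambda_1,\bar\lambda_2\colon[m]\to\Z_2^2$ with $\bar\lambda_j(i)=\a$ precisely for $i\in\mu_\a$, then define $\Lambda(p_1)=(1,0,0)$, $\Lambda(p_2)=(0,1,0)$, and, for $i\ne p$, the vector $\Lambda(i)\in\Z_2^3$ whose $(1,3)$-coordinates equal $\bar\lambda_1(i)$ and whose $(2,3)$-coordinates equal $\bar\lambda_2(i)$. This is unambiguous because the support equality forces the second coordinates of $\bar\lambda_1(i)$ and $\bar\lambda_2(i)$ to coincide (each is $0$ exactly on $\mu_\a$). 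It remains to verify non-singularity of $\Lambda$ on every $2$-face of $\wed_p P_m$: on $\{p_1,p_2,p\pm 1\}$ this reduces to $\bar\lambda_j(p\pm 1)\ne\a$, which follows from non-singularity of $\bar\lambda_j$ on the $P_m$-edges incident to $p$; on $\{p_\epsilon,i,j\}$ with $\{i,j\}$ an edge of $P_m$ not incident to $p$, the $3\times 3$ determinant collapses to the $2\times 2$ determinant expressing linear independence of $\bar\lambda_{3-\epsilon}(i)$ and $\bar\lambda_{3-\epsilon}(j)$. By construction $\proj_{p_2}\Lambda=\bar\lambda_1$ and $\proj_{p_1}\Lambda=\bar\lambda_2$, so $\lambda_1$ and $\lambda_2$ are $p$-adjacent.

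The main obstacle I anticipate is the \emph{matching representatives} step in the converse, namely arranging that the common support $\mu_\a$ really does let one normalize $\bar\lambda_1$ and $\bar\lambda_2$ so that their coordinates fit together under the chosen embedding. Once this bookkeeping is in place, the non-singularity verifications become a routine unpacking of $2\times 2$ and $3\times 3$ determinants over $\Z_2$.
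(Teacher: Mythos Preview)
Your argument is correct. Both proofs are direct computations with a characteristic map $\Lambda$ over $\wed_p P_m$, but the packaging differs. The paper invokes the cited ``standard form'' matrix \eqref{eq:edge}, writes $\Lambda$ with explicit parameters $a_i,b_i,c_i$, computes the two projections, and reads off that non-singularity of $\lambda_2$ forces $a_1=\cdots=a_j=0$, which is exactly the support equality. You instead work intrinsically: from the face structure of $\wed_p P_m$ you extract that $\{p_\epsilon,i\}$ is always an edge, hence $\Lambda(i)\notin\{\Lambda(p_1),\Lambda(p_2)\}$, and then identify both $\supp_{\lambda_1}p$ and $\supp_{\lambda_2}p$ with the single set $\{i:\Lambda(i)=\Lambda(p_1)+\Lambda(p_2)\}$. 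This makes the forward direction a shade cleaner and avoids the external reference; on the other hand, the paper's explicit parametrization feeds directly into the next proposition on $\b\c$-piece inversions, which is the real payoff.

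Your anticipated obstacle in the converse is not an obstacle at all: since $\mu_\a$ is a common block of the two D-J partitions, you may simply declare it to be the $\a$-preimage in each representative, and then the second coordinate of $\bar\lambda_1(i)$ and $\bar\lambda_2(i)$ is $0$ exactly when $i\in\mu_\a$, so they agree automatically. The remaining non-singularity checks on the two types of $2$-faces are exactly as you describe.
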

    \begin{proof}
        A proof is easily given by direct matrix calculation.
        Note that $\lambda_1$ and $\lambda_2$ are $p$-adjacent if and only if there is a characteristic matrix $\Lambda$ over $\wed_pP_m$ whose two projections are $\lambda_1$ and $\lambda_2$.
        As a version of Lemma~4.2 of \cite{CP_wedge_2} for the $\Z_2$-characteristic maps, it is known that for a characteristic map
            \[
                \lambda_1 = \begin{pmatrix}
                    \w_1 & \w_2 & \cdots & \w_m
                \end{pmatrix}_{n\times m}
            \]
        over a simplicial complex $K$ where $\w_i$ denotes the $i$th column vector of $\lambda_1$, the D-J class $\Lambda$ over $\wed_p K$ one of whose projection is $\lambda_1$ can be expressed by a matrix of the following form, called a \emph{standard form for an edge},
            \begin{equation}\label{eq:edge}
                \Lambda = \left(\begin{array}{cccccccc}
                    1    & \cdots & p-1      &  p_1 & p_2& p+1 & \cdots & m    \\ \hline
                    \w_1 & \cdots & \w_{p-1} & \w_p & 0  & \w_{p+1} & \cdots & \w_m \\
                    e_1  & \cdots & e_{p-1}  & 1   &  1 &  e_{p+1} & \cdots & e_m
                \end{array}\right)_{(n+1)\times (m+1)},
            \end{equation}
        for $e_i \in \Z_2$ for $i\ne p$. The numbers above the horizontal line are indicators for vertices of simplicial complexes.

        In our case, let us assume that $\lambda_1(p) = \lambda_2(p) = \binom{1}{0}$.
        For a suitable rearrangement of columns, $\Lambda$ can be written as
        \[
            \Lambda =
            \left(\begin{array}{ccccccccccc}
                1 & 0 & 1 & \cdots & 1  & 0 & \cdots & 0 & 1 & \cdots & 1 \\
                0 & 0 & 0 & \cdots & 0  & 1 & \cdots & 1 & 1 & \cdots & 1 \\
                1 & 1 & a_1 & \cdots & a_j & b_1 & \cdots &b_k  &c_1  & \cdots & c_\ell
            \end{array}\right),
        \]
        whose first two columns correspond to $\Lambda(p_1)$ and $\Lambda(p_2)$ respectively. Check that the projection $\lambda_1=\proj_{p_2}\Lambda$ is
        \[
            \begin{pmatrix}
                1 & 1 & \cdots & 1  & 0 & \cdots & 0 & 1 & \cdots & 1 \\
                0 & 0 & \cdots & 0  & 1 & \cdots & 1 & 1 & \cdots & 1
            \end{pmatrix}
        \]
        and $\lambda_2=\proj_{p_1}\Lambda$ is
        \[
            \begin{pmatrix}
                1 & 1+a_1 & \cdots & 1+a_j  & b_1 & \cdots &b_k  & 1+c_1 & \cdots & 1+c_\ell \\
                0 & 0     & \cdots & 0      & 1 & \cdots & 1 & 1 & \cdots & 1
            \end{pmatrix}.
        \]
        One can see that $a_1=\cdots=a_j = 0$ in order for $\lambda_2=\proj_{p_1}\Lambda$ to satisfy non-singularity condition, which is equivalent to that $\supp_{\lambda_1} p = \supp_{\lambda_2} p$.
    \end{proof}

    Recall that a characteristic map $\lambda$ over $P_m$ can be regarded as a finite circular sequence consisting of $\a,\b,$ and $\c$.
    If $S \subset [m]$ is a non-consecutive subset with $|S| \ge 2$, then $\lambda|_{[m]\setminus S}$ is divided into $|S|$ finite sequences called \emph{pieces of $\lambda$ determined by $S$}. If $S = \lambda^{-1}(\a)=\{p \mid \lambda(p) = \a\}$, then one obtains pieces consisting of $\b$ and $\c$, each of which will be called a \emph{$\b\c$-piece}. Likewise, \emph{$\c\a$-pieces} and \emph{$\a\b$-pieces} are defined.

    For a $\b\c$-piece $s$, the \emph{inversion} of $s$ is obtained from $s$ by exchanging $\b$ and $\c$.
    The following proposition completely describes the edges of $D(P_m)$.
    \begin{proposition}
        Let $\lambda_1$ and $\lambda_2$ be two characteristic maps over $P_m$ and $p \in [m]$. We also assume that $\lambda_1(p)=\lambda_2(p)=\a$. Then the D-J classes with respect to $\lambda_1$ and $\lambda_2$ are $p$-adjacent if and only if $\lambda_1$ becomes $\lambda_2$ after replacing a number of $\b\c$-pieces by their inversions.
    \end{proposition}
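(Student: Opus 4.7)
The plan is to reduce the statement to Lemma~\ref{lem:edge} and then unpack what the support condition says combinatorially when we view $\lambda_1,\lambda_2$ as circular words on $\{\a,\b,\c\}$.

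First, by Lemma~\ref{lem:edge}, the D-J classes of $\lambda_1$ and $\lambda_2$ are $p$-adjacent if and only if $\supp_{\lambda_1} p = \supp_{\lambda_2} p$. Because we have fixed $\lambda_1(p) = \lambda_2(p) = \a$, both supports coincide with the preimages of $\a$, so the $p$-adjacency condition is equivalent to the equality $\lambda_1^{-1}(\a) = \lambda_2^{-1}(\a)$. This is the key translation: $p$-adjacency at $p$ with $\lambda_i(p) = \a$ means exactly that $\lambda_1$ and $\lambda_2$ have the same set of $\a$-positions around the polygon.

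For the forward direction, I would observe that once the $\a$-positions agree, the decomposition of $[m] \setminus \lambda_i^{-1}(\a)$ into maximal runs of consecutive non-$\a$ indices is the same for $i=1,2$; these runs are precisely the index sets of the $\b\c$-pieces. On a single such run, any two consecutive indices of $[m]$ belong to it, so by the non-singularity condition the values of $\lambda_i$ on a run must alternate between $\b$ and $\c$. Hence on each run there are exactly two admissible patterns, and they are each other's inversions. Therefore $\lambda_2$ restricted to each run is either equal to $\lambda_1$ or to its inversion, which is the desired description.

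For the converse, inverting a $\b\c$-piece clearly leaves $\lambda^{-1}(\a)$ unchanged, so $\supp_{\lambda_1} p$ is preserved, and Lemma~\ref{lem:edge} closes the argument; one only needs a quick verification that the inverted word is still a valid characteristic map, which is immediate since alternation inside the piece is preserved and the two $\a$'s bordering the piece remain distinct from both $\b$ and $\c$. The whole argument is short; the only mildly subtle point is noticing that a $\b\c$-piece admits exactly two alternating fillings, so inversion exhausts all possibilities consistent with a fixed $\a$-support.
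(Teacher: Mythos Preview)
Your argument is correct. The route differs slightly from the paper's: the paper repeats the matrix calculation from the proof of Lemma~\ref{lem:edge}, tracking the third-row entries $s_1,\dotsc,s_r$ along a single $\b\c$-piece and using non-singularity of $\lambda_2$ to force $s_1=\dotsb=s_r$, so that each piece is either left alone ($s=0$) or inverted ($s=1$). You instead invoke Lemma~\ref{lem:edge} as a black box to get $\lambda_1^{-1}(\a)=\lambda_2^{-1}(\a)$, and then observe combinatorially that on each maximal non-$\a$ run the alternation forced by non-singularity leaves exactly two possible words, which are mutual inversions. Your route is a bit cleaner since it avoids reopening the standard-form matrix, while the paper's computation has the minor advantage of exhibiting explicitly which third-row pattern in $\Lambda$ produces which inversion, information that is used tacitly in the later square calculations.
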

    \begin{proof}
        The proof is similar to Lemma~\ref{lem:edge}. Pick a $\b\c$-piece of $\lambda_1$ together $\a$'s on its ends. For example, consider a part of $\lambda_1$
        \[
            \begin{pmatrix}
                1 & 0 & 1 & 0 & 1 & \cdots & 1 & 0 & 1 \\
                0 & 1 & 1 & 1 & 1 & \cdots & 1 & 1 & 0
            \end{pmatrix}
        \]
        which corresponds to $\a\b\c\b\c\cdots \c \b\a$. According to the proof of Lemma~\ref{lem:edge},
        $\lambda_2$ should look like
        \[
            \begin{pmatrix}
                1 & 0+s_1 & 1+s_2 & 0+s_3   & \cdots & 1+s_{r-1} & 0+s_r & 1 \\
                0 & 1 & 1 & 1 &   \cdots & 1 & 1 & 0
            \end{pmatrix}
        \]
        at the same place, for $s_1,\dotsc,s_r \in \Z_2$. Since $\lambda_2$ satisfies non-singularity condition, we have $s_1 = \dotsb = s_r$. If $s_1=0$, then the corresponding $\b\c$-piece is fixed. If $s_1=1$, then the inversion is applied to the $\b\c$-piece. This works regardless of the shape of the $\b\c$-piece, completing the proof.
    \end{proof}

    \begin{example}
        Over $P_7$, assume that $\lambda = \a\b\c\b\a\c\b$ and $p=1$ or $p=5$. Then there are apparently three characteristic maps obtained by a number of inversions of $\b\c$-pieces: $\a\c\b\c\a\c\b$, $\a\b\c\b\a\b\c$, and $\a\c\b\c\a\b\c$. But since the whole exchanging of $\b$ and $\c$ does not change the D-J equivalence type, $\a\b\c\b\a\c\b \cong \a\c\b\c\a\b\c$ and $\a\c\b\c\a\c\b\cong \a\b\c\b\a\b\c$. In general, there are $2^{|\supp_\lambda p|-1}$ D-J classes $p$-adjacent to the D-J class with respect to $\lambda$.
    \end{example}

    Let $\lambda$ be a characteristic map over $P_m$ and let $s$, $t \in [m]$ be two vertices of $P_m$ such that $\lambda(s) = \lambda(t) = \a$. Then $s$ and $t$ divide $[m]\setminus \{s,t\}$ into two pieces $A$ and $B$. Then we  obtain a new characteristic map $\inv_{\{s,t\}}\lambda$ by replacing every $\b\c$-piece lying in $A$ by its inversion.  Although this definition depends on the choice of $A$, $\inv_{\{s,t\}}\lambda$ is well defined up to D-J equivalence provided $\lambda(s) = \lambda(t)$. More generally, let $S = \{s_1,\dotsc, s_{2\ell}\}\subset[m]$ be a non-consecutive subset of even cardinality and let $\lambda$ be a characteristic map over $P_m$ such that $\lambda(s_i) = \lambda(s_j)$ for $i,j=1,\dotsc, 2\ell$. Then we define
    \[
        \inv_S\lambda = \inv_{\{s_1,s_2\}}\inv_{\{s_3,s_4\}}\dotsb \inv_{\{s_{2\ell-1},s_{2\ell}\}}\lambda.
    \]
    One can easily see that the operation $\inv_S$ is well defined up to D-J equivalence and does not depend on the order of $s_1,\dotsc, s_{2\ell}$.
    \begin{remark}\label{rem:chessboard}
        Intuitively, $\inv_S$ can be understood as the following. One can regard $S$ as a set of $2\ell$ points on the circle. Then one paints each piece of the circle $S^1 \setminus S$ black or white like the chessboard, such that neighboring pieces have different colors. Then one performs inversion for every $\b\c$-piece in the black region. There are exactly two ways of such colorings, but $\inv_S$ is independent of coloring up to D-J equivalence.
    \end{remark}

    \begin{definition}
        For a D-J class $\lambda$ over $P_m$, an ordered pair $(p,S)$ is called an \emph{e-set} (compatible with $\lambda$) if it satisfies the following
        \begin{enumerate}
            \item $p \in [m]$,
            \item $S \subset [m]$ is of even cardinality, and
            \item 
            $S \subset \supp_\lambda p$.
        \end{enumerate}
    \end{definition}

    Note that $(p,\varnothing)$ is an e-set for arbitrary $\lambda$. We call $(p,\varnothing)$ an \emph{empty e-set}.
    We have a natural map $\epsilon$ which maps an edge $e=\{\lambda_1, \lambda_2, p\}$ of $D(P_m)$ to an e-set $\epsilon(e) = (p,S)$ in the following way: $p$ is the color of $e$, and $S$ is the unique set such that $\lambda_2  = \inv_S \lambda_1$.
    \begin{example}
        Let us consider the edge $e = \{\lambda_1,\lambda_2,p\}$ when $p=1$,
        \begin{align*}
            \lambda_1 &= \a \b\c \a \b \a \b \a \b \a \b,  \text{ and} \\
            \lambda_2 &= \a \b\c \a \c \a \c \a \b \a \c.
        \end{align*}
        Then $\supp_{\lambda_1}p = \{1,4,6,8,10\}$ and $\epsilon(e) = (1,\{1,4,8,10 \})$.
    \end{example}

    When $\lambda$ is a node incident to $e$ in $D(P_m)$ and $\epsilon(e) = (p,S)$, $\epsilon(e)$ is compatible with $\lambda$. Conversely, for any node $\lambda$ of $D(P_m)$ and its compatible e-set $(p,S)$, there is a unique edge $e$ incident to $\lambda$ such that $\epsilon(e) = (p,S)$.

    \subsection{Square of $D(P_m)$}
    The last step is to find realizable squares of $D(P_m)$. Recall that, if two edges $\{\lambda_1,\lambda_2,p\}$ and $\{\lambda_1,\lambda_3,q\}$ $(p\neq q)$ span a realizable square, then the fourth vertex is unique. That is, if both of the two squares
    \begin{equation*}
        \xymatrix{
                \lambda_{1} \ar@{-}[rr]^p \ar@{-}[dd]^q  & & \lambda_{2} \ar@{-}[dd]^q \\
                  &   &  \\
                \lambda_{3}\ar@{-}[rr]^p &   & \lambda_{4} }
        \hbox{ and }
        \xymatrix{
                \lambda_{1} \ar@{-}[rr]^p \ar@{-}[dd]^q  & & \lambda_{2} \ar@{-}[dd]^q \\
                  &   &  \\
                \lambda_{3}\ar@{-}[rr]^p &   & \lambda_{5} }
    \end{equation*}
    are realizable, then $\lambda_4 = \lambda_5$ as D-J classes.
    In order to find a realizable square, we firstly consider $\Lambda$ over $\wed_{p,q}K$ whose projections are given three characteristic maps $\lambda_1$, $\lambda_2$ and $\lambda_3$, namely, $\proj_{\{p_2,q_2\}}\Lambda = \lambda_1$, $\proj_{\{p_2,q_1\}}\Lambda = \lambda_2$ and $\proj_{\{p_1,q_2\}}\Lambda = \lambda_3$. Then, if $\Lambda$ exists, then it must be uniquely determined by $\lambda_1$, $\lambda_2$, and $\lambda_3$, as well as $\lambda_4=\proj_{\{p_1,q_1\}}\Lambda$.

    We have the matrix \eqref{eq:edge} for an edge of the diagram. For realizable squares, every D-J class over $\wed_{p,q} K$ for $p \ne q$ can be expressed by the following matrix called a \emph{standard form for a square},
        \begin{equation*}
            \left(\begin{array}{ccccccccccc}
                1    & \cdots  & p-1 & p_1 & p_2 & p+1 & \cdots & q_1 & q_2 & \cdots & m    \\ \hline
                \w_1 & \cdots  &    \w_{p-1} & \w_p & 0  & \w_{p+1} &  \cdots & \w_q & 0 & \cdots & \w_m \\
                e_1  & \cdots & e_{p-1} &     1   &  1 & e_{p+1}  & \cdots &  e_q & 0 & \cdots & e_m \\
                f_1  & \cdots & f_{p-1}  &   f_p   &  0 &  f_{p+1}  &  \cdots &  1 & 1 & \cdots & f_m
            \end{array}\right)_{(n+2)\times (m+2)},
        \end{equation*}
    by Proposition~4.3 of \cite{CP_wedge_2}. Note that its two projections with respect to $p_2$ and $q_2$ have the form of \eqref{eq:edge} when $v=q$ and $v=p$ respectively. We can regard them as the two edges $\{\lambda_1,\lambda_2,p\}$ and $\{\lambda_1,\lambda_3,q\}$.
    One can see that $\Lambda$ is a genuine characteristic matrix over $\wed_{p,q} P_m$ if and only if $\proj_{\{p_1,q_1\}}\Lambda$ satisfies the non-singularity condition as a characteristic map over $P_m$. In that case $\proj_{\{p_1,q_1\}}\Lambda = \lambda_4$ is the fourth node of the realizable square.

    By Lemma~\ref{lem:edge}, we know that $\supp_{\lambda_1}p = \supp_{\lambda_2}p$ and $\supp_{\lambda_1}q = \supp_{\lambda_3}q$. We have two cases depending on whether $\supp_{\lambda_1}p = \supp_{\lambda_1}q$ or not. Let us consider the first case when $\supp_{\lambda_1}p = \supp_{\lambda_1}q$.
    We may assume that $\lambda_1(p) = \lambda_1(q)= \a$. Let $\Lambda$ be a standard form
    over $\wed_{p,q}P_m$. Among vertices of $\wed_{p,q}P_m$, we pick $p_1,p_2,q_1,q_2$ and any other point $r$ as a representative and we write $\lambda_1(r) = \binom{v_1}{v_2}$. The submatrix of the matrix $\Lambda$ consisting of the columns corresponding to $p_1,p_2,q_1,q_2$, and $r$ would look like the following:
    \[
        \Lambda' = \begin{pmatrix}
            p_1 & p_2 & q_1 & q_2 & r \\ \hline
            1   & 0   & 1   & 0   & v_1 \\
            0   & 0   & 0   & 0   & v_2 \\
            1   & 1   & 0   & 0   & x \\
            0   & 0   & 1   & 1   & y
        \end{pmatrix},
    \]
    where $x,y \in \Z_2$. To compute $\lambda_2(r),\lambda_3(r),$ and $\lambda_4(r)$ from this matrix, we add the third and fourth row to the first one to obtain
    \[
        \begin{pmatrix}
            p_1 & p_2 & q_1 & q_2 & r \\ \hline
            0   & 1   & 0   & 1   & v_1+x+y \\
            0   & 0   & 0   & 0   & v_2 \\
            \underline{1}   & 1   & 0   & 0   & x \\
            0   & 0   & \underline{1}   & 1   & y
        \end{pmatrix}.
    \]
    Since the columns corresponding to $p_1$ and $q_1$ are coordinate vectors, one can obtain $\proj_{\{p_1,q_1\}}\Lambda'$ by deleting rows and columns containing the underlined entries:
    \[
        \begin{pmatrix}
            p_2 & q_2& r \\ \hline
            1   & 1  & v_1+x+y \\
            0   & 0  & v_2
        \end{pmatrix}.
    \]
    As consequence, we conclude that $\lambda_4(r) = \binom{v_1+x+y}{v_2}$ and similarly $\lambda_2(r) = \binom{v_1+x}{v_2}$ and $\lambda_3(r) = \binom{v_1+y}{v_2}$. When $v_2 = 0$, $v_1$ should be $1$ and one concludes $\lambda_1(r) = \binom{1}{0} = \a$ and $x=y=0$. When $v_2 = 1$, $\lambda_i(r) = \b$ or $\c$ for any $i=1,2,3,4$. In this case, observe that $x$ and $y$ indicate whether the $\b\c$-piece at the position of $r$ is inverted or not. Therefore we reach the following theorem.

    \begin{theorem}
        Consider the characteristic map $\lambda_1$ over $P_m$ of the form
        \[
            \lambda_1 = \a s_1 \a s_2 \a \cdots \a s_k,
        \]
        where $k = |\supp_\lambda 1|$ and $s_i$ are $\b\c$-pieces for $1\le i \le k$. Let $p$ and $q$ be distinct two vertices of $P_m$ such that $\lambda_1(p)=\lambda_1(q)=\a$. Then any two edges $\{\lambda_1,\lambda_2,p\}$ and $\{\lambda_1,\lambda_3,q\}$ span a realizable square with fourth node $\lambda_4$. Furthermore, one can assume that
        \[
            \lambda_2 = \a \phi_1(s_1) \a \phi_2(s_2) \a \cdots \a \phi_k(s_k)
        \]
        and
        \[
            \lambda_3 = \a \psi_1(s_1) \a \psi_2(s_2) \a \cdots \a \psi_k(s_k),
        \]
        where each of $\phi_i$ and $\psi_i$ is either the identity or the inversion, and the fourth node $\lambda_4$ is computed as follows:
        \[
            \lambda_4 = \a \psi_1(\phi_1(s_1)) \a \psi_2(\phi_2(s_2)) \a \cdots \a \psi_k(\phi_k(s_k)).
        \]
        \qed
    \end{theorem}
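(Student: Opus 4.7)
The plan is to exploit the explicit standard-form matrix computation that has already been set up in the paragraph immediately preceding the theorem statement. Since the hypothesis gives $\lambda_1(p)=\lambda_1(q)=\a$, we automatically have $\supp_{\lambda_1}p = \supp_{\lambda_1}q = \lambda_1^{-1}(\a)$, so we are in the first of the two cases distinguished there and the $4\times 5$ submatrix analysis applies verbatim. The overall strategy is: (i) put $\lambda_2$ and $\lambda_3$ into the claimed piecewise form by invoking the previous proposition on edges; (ii) assemble a standard-form matrix $\Lambda$ over $\wed_{p,q}P_m$ whose three relevant projections reproduce $\lambda_1,\lambda_2,\lambda_3$; (iii) read off $\lambda_4=\proj_{\{p_1,q_1\}}\Lambda$ via the row-reduction already performed; and (iv) check that $\lambda_4$ satisfies non-singularity, which by the criterion of the standard form is equivalent to $\Lambda$ being a genuine characteristic map and hence to the square being realizable.

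For step (i), the previous proposition asserts that a D-J class $p$-adjacent to $\lambda_1$ is obtained by inverting some subfamily of the $\b\c$-pieces $s_1,\dotsc,s_k$; choosing the representative of $\lambda_2$ in which no global $\b \leftrightarrow \c$ swap has been applied yields the form $\lambda_2 = \a\,\phi_1(s_1)\,\a\,\phi_2(s_2)\cdots \a\,\phi_k(s_k)$ with each $\phi_i \in \{\mathrm{id},\mathrm{inv}\}$, and similarly for $\lambda_3$ with functions $\psi_i$. For step (ii), define entries $x_r,y_r \in \Z_2$ at each vertex $r$ by declaring $x_r=1$ precisely when $r$ lies in a piece $s_i$ with $\phi_i = \mathrm{inv}$, and $y_r=1$ precisely when $r$ lies in a piece with $\psi_i = \mathrm{inv}$; place these in the two new rows of the standard form template generalizing \eqref{eq:edge} to $\wed_{p,q}P_m$. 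A direct check shows that the projections $\proj_{q_2}\Lambda$ and $\proj_{p_2}\Lambda$ recover $\lambda_2$ and $\lambda_3$ respectively.

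For step (iii), the row reduction carried out in the excerpt yields $\lambda_4(r) = \binom{v_1+x_r+y_r}{v_2}$ whenever $\lambda_1(r) = \binom{v_1}{v_2}$. At positions where $\lambda_1(r)=\a$, we have $v_2=0$ and, by construction, $x_r=y_r=0$, so $\lambda_4(r)=\a$; thus the $\a$-skeleton of $\lambda_1$ is preserved. Inside a single $\b\c$-piece $s_i$, both $x_r$ and $y_r$ are constant (determined by $\phi_i$ and $\psi_i$), so the sum $x_r+y_r$ acts either as the identity or as the inversion uniformly along $s_i$; therefore $\lambda_4$ restricted to this piece is exactly $\psi_i(\phi_i(s_i))$, giving the claimed formula. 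Finally, for step (iv), note that $\psi_i \circ \phi_i$ is either the identity or an inversion of the whole piece, and both operations preserve the property that no two consecutive letters coincide, so $\lambda_4$ satisfies the non-singularity condition. The main obstacle is bookkeeping: one must carefully keep track of how the row-by-row elementary operations act on each $\b\c$-piece, and ensure that the ambiguity in choosing D-J representatives of $\lambda_2$ and $\lambda_3$ is absorbed into the legitimate freedom of the global $\b \leftrightarrow \c$ swap, which is what licenses the ``one can assume'' clause in the statement.
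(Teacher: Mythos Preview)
Your proposal is correct and follows essentially the same approach as the paper: the theorem is stated with a \qed\ because its proof is precisely the matrix computation in the preceding paragraphs, and you have faithfully reconstructed that computation, organizing it in a slightly more constructive direction (building $\Lambda$ from the data of $\lambda_1,\lambda_2,\lambda_3$ rather than starting from an abstract $\Lambda$ and reading off its projections). The key step --- the row reduction yielding $\lambda_4(r)=\binom{v_1+x+y}{v_2}$ and the observation that $x,y$ are piecewise constant on each $\b\c$-piece --- is identical in both.
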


    \begin{example}
        Let $\lambda_1$, $\lambda_2$, and $\lambda_3$ be characteristic maps over $P_{11}$ represented by
        \begin{align*}
            \lambda_1 &= \a \b\c\b \a \c \a \b \a \c\b  \\
            \lambda_2 &= \a \b\c\b \a \b \a \b \a \b\c  \\
            \lambda_3 &= \a \c\b\c \a \b \a \b \a \b\c
        \end{align*}
        and pick distinct elements $p$ and $q$ in $\mu_\a=\{1,5,7,9\}$. Then the fourth node determined by the edges $\{\lambda_1,\lambda_2,p\}$ and $\{\lambda_1,\lambda_3,q\}$ is
        \[
            \lambda_4 = \a \c\b\c \a \c \a \b \a \c\b.
        \]
    \end{example}

    The remaining second case is when $\supp_{\lambda_1}p \ne \supp_{\lambda_1}q$. In this case, we can assume that $\lambda_i(p) = \a$ and $\lambda_i(q)=\b$ for all $i=1,\dotsc,4$. Just like before, let $\Lambda$ be a standard form for $\wed_{p,q}P_m$. We again pick $p_1,p_2,q_1,q_2$, and $r$ like before and we write $\lambda_1(r) = \binom{v_1}{v_2}$. The submatrix of the matrix $\Lambda$ consisting of the columns corresponding to $p_1,p_2,q_1,q_2$, and $r$ would look like the following:
    \[
        \begin{pmatrix}
            p_1 & p_2 & q_1 & q_2 & r \\ \hline
            1   & 0   & 0   & 0   & v_1 \\
            0   & 0   & 1   & 0   & v_2 \\
            1   & 1   & 0   & 0   & x \\
            0   & 0   & 1   & 1   & y
        \end{pmatrix},
    \]
    where $x,y \in \Z_2$. Again like before, we obtain
    \begin{equation}\label{eq:l2l3l4}
        \lambda_2(r) = \binom{v_1+x}{v_2},\, \lambda_3(r) = \binom{v_1}{v_2+y}\, \text{ and }\lambda_4(r) = \binom{v_1+x}{v_2+y}.
    \end{equation}
    One observes that if $\Lambda$ is non-singular then $xy = 0$ because if both of $x$ and $y$ were 1, then one of $\lambda_1(r),\,\lambda_2(r),\,\lambda_3(r),\,$ or $\lambda_4(r)$ should be zero, violating the non-singularity condition.

    Let $\mu$ and $\nu$ be two characteristic maps, not D-J classes, over $P_m$. We denote by $\mu\Delta\nu$ the set $\{v \in [m] \mid \mu(v)\ne\nu(v)\}$. For a subset $A \subseteq [m]$, we denote by $\overline{A}$ the set $\overline{A} = \{v+1,v,v-1 \mod m\mid v\in A \}\subseteq [m]$.

    \begin{lemma}\label{lem:squaretype2}
        Let $\lambda_1$, $\lambda_2$ and $\lambda_3$ be characteristic maps over $P_m$ such that $\lambda_i (p) =\a$ and $\lambda_i(q)=\b$ for $i=1,2,3$. Then the following are equivalent.
        \begin{enumerate}
            \item Two edges
        $\{\lambda_1,\lambda_2,p\}$ and $\{\lambda_1,\lambda_3,q\}$ span a realizable square.
            \item $\overline{\lambda_1\Delta\lambda_2} \cap {(\lambda_1\Delta\lambda_3)} =\varnothing$.
            \item $\overline{\lambda_1\Delta\lambda_2} \cap \overline{\lambda_1\Delta\lambda_3} =\varnothing$.
        \end{enumerate}
    \end{lemma}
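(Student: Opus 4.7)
Set $A := \lambda_1\Delta\lambda_2$ and $B := \lambda_1\Delta\lambda_3$. My plan is to establish $(1)\Leftrightarrow(2)$ and $(2)\Leftrightarrow(3)$ using the piece structure of $A$ and $B$ inherited from the proposition describing edges of $D(P_m)$: $A$ is a union of complete $\b\c$-pieces of $\lambda_1$, so $A \subseteq \lambda_1^{-1}(\{\b,\c\})$; and $B$ is a union of complete $\c\a$-pieces, so $B \subseteq \lambda_1^{-1}(\{\a,\c\})$. From the matrix computation just preceding the lemma, existence of $\Lambda$ already forces $A \cap B = \varnothing$; (1) is then equivalent to non-singularity of $\lambda_4 = \proj_{\{p_1,q_1\}}\Lambda$ over $P_m$. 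Using (\ref{eq:l2l3l4}) together with the above inclusions, $\lambda_4(r) \neq 0$ is automatic at every $r$, so the only remaining condition for (1) is $\lambda_4(r) \neq \lambda_4(r+1)$ on every edge of $P_m$.

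For $(2) \Rightarrow (1)$ I will use that (2) is symmetric in $A$ and $B$, hence also $\overline{B}\cap A = \varnothing$. At any edge $\{r,r+1\}$ of $P_m$, if either endpoint lies in $A$, then both endpoints lie in $\overline{A}$ and therefore neither lies in $B$; formula (\ref{eq:l2l3l4}) then gives $\lambda_4 = \lambda_2$ at both endpoints, and non-singularity of $\lambda_2$ suffices. Symmetrically, an edge meeting $B$ is handled by $\lambda_3$, and an edge disjoint from $A \cup B$ by $\lambda_1$. For the converse $(1)\Rightarrow(2)$, I will suppose there is an adjacent pair $\{a,b\}$ with $a \in A$ and $b \in B$. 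Since $\lambda_1(a) \in \{\b,\c\}$, $\lambda_1(b) \in \{\a,\c\}$ and $\lambda_1(a) \neq \lambda_1(b)$, the piece structure rules out the colour pairs $(\b,\c)$ (which would force $b$ into the same $\b\c$-piece as $a$, hence $b \in A$) and $(\c,\a)$ (which would force $a \in B$), leaving only $(\lambda_1(a),\lambda_1(b)) = (\b,\a)$; but then (\ref{eq:l2l3l4}) gives $\lambda_4(a) = \c = \lambda_4(b)$, violating (1).

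The equivalence $(2)\Leftrightarrow(3)$ is handled by contrapositive, $(3)\Rightarrow(2)$ being trivial from $B \subseteq \overline{B}$. For $(2)\Rightarrow(3)$, I will suppose $v \in \overline{A}\cap\overline{B}$, yielding $a \in A$ and $b \in B$ at cyclic distance at most $2$. Distances $0$ and $1$ immediately place $b$ (or $a$) in $\overline{A}\cap B$. For distance $2$, letting $c$ denote the intermediate vertex, I will case-analyse $\lambda_1(c)$: if $\lambda_1(c) = \a$, adjacency with $b$ forces $\lambda_1(b) = \c$, so $c$ lies in the same $\c\a$-piece as $b$, giving $c \in B\cap\overline{A}$; if $\lambda_1(c) = \b$, the symmetric argument places $c \in A$, so $b \in \overline{A}\cap B$; and if $\lambda_1(c) = \c$, then $c$ lies both in the $\b\c$-piece of $a$ and the $\c\a$-piece of $b$, giving $c \in A\cap B$. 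Each case violates (2). The main technical step is this distance-$2$ case analysis; it is combinatorial rather than computational, its correctness resting entirely on the $\b\c$-/$\c\a$-piece structure rather than on any further matrix argument.
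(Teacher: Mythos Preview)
Your proof is correct. The only phrasing I would tighten is ``existence of $\Lambda$ already forces $A \cap B = \varnothing$'': the standard-form matrix $\Lambda$ always exists as a matrix, and what you are actually invoking is the paper's observation that \emph{non-singularity} of $\Lambda$ (i.e., condition~(1)) forces $xy=0$, hence $A\cap B=\varnothing$. You use this correctly (only in the $(1)\Rightarrow(2)$ direction, where (1) is assumed; for $(2)\Rightarrow(1)$ you get $A\cap B=\varnothing$ from $A\subseteq\overline A$), so the logic is sound --- but as written a reader might think you are claiming $A\cap B=\varnothing$ unconditionally, which is false.

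Your route and the paper's differ in organization rather than in substance. The paper argues cyclically $(1)\Rightarrow(2)\Rightarrow(3)\Rightarrow(1)$, using (3) as the hypothesis from which non-singularity of $\lambda_4$ is deduced. You instead prove $(1)\Leftrightarrow(2)$ directly: your $(2)\Rightarrow(1)$ argument --- observing that every edge of $P_m$ lies entirely in the region where $\lambda_4$ agrees with one of $\lambda_1,\lambda_2,\lambda_3$ --- is a cleaner replacement for the paper's $(3)\Rightarrow(1)$ step and avoids passing through (3). For $(1)\Rightarrow(2)$ the paper works with a boundary point $r\in\overline A\setminus A$, while you case-split on the colour pair $(\lambda_1(a),\lambda_1(b))$; both reach the same conclusion $\lambda_4(a)=\lambda_4(b)=\c$ on an edge. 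For $(2)\Rightarrow(3)$ the paper does an ad-hoc inspection of how the ends of $\overline A$ and $\overline B$ can overlap, whereas your distance-$2$ trichotomy on $\lambda_1(c)$ is more systematic and makes the role of the $\b\c$-/$\c\a$-piece structure completely transparent. Both approaches rest on the same two ingredients --- formula~\eqref{eq:l2l3l4} and the fact that $A$ (resp.\ $B$) is a union of whole $\b\c$-pieces (resp.\ $\c\a$-pieces) --- so neither is more general, but yours is arguably easier to verify line by line.
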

    \begin{proof}
         Note that $r\in \lambda_1\Delta\lambda_2$ if and only if $x=1$ in \eqref{eq:l2l3l4}, and $r\in \lambda_1\Delta\lambda_3$ if and only if $y=1$ in \eqref{eq:l2l3l4}. In other words, $\lambda_1\Delta\lambda_2$ indicates the position of the inverted $\b\c$-pieces and $\lambda_1\Delta\lambda_3$ corresponds to the inverted $\c\a$-pieces. Since $xy=0$, one has $ {(\lambda_1\Delta\lambda_2)} \cap  {(\lambda_1\Delta\lambda_3)} =\varnothing$.

         Let us show the implication $(1) \Longrightarrow (2)$. Suppose that there is an element $r \in [m]$ such that $r \in \overline{\lambda_1\Delta\lambda_2} \setminus (\lambda_1\Delta\lambda_2)$ and $r \in \lambda_1\Delta\lambda_3$. Because $r$ is adjacent to an inverted $\b\c$-piece, $\lambda_1(r) = \a$. Without loss of generality, one can assume that $r+1 \in \lambda_1\Delta\lambda_2$. Then $r+1 \in \overline{\lambda_1\Delta\lambda_3} \cap {(\lambda_1\Delta\lambda_2)}$ and reminding that $ {(\lambda_1\Delta\lambda_2)} \cap  {(\lambda_1\Delta\lambda_3)} =\varnothing$, we obtain that $\lambda_1(r+1)= \b$ because $r+1 \in \overline{\lambda_1\Delta\lambda_3} \setminus (\lambda_1\Delta\lambda_3)$. Then \eqref{eq:l2l3l4} implies that $\lambda_4(r) = \lambda_4(r+1) = \c$, which contradicts to the non-singularity condition.

         To show the implication $(2) \Longrightarrow (3)$, consider parts of $\overline{\lambda_1\Delta\lambda_2}$ and $\overline{\lambda_1\Delta\lambda_3}$ where they could intersect. There are two possibilities of $\lambda_1$ restricted on $\overline{\lambda_1\Delta\lambda_2}$:
         \[
            \cdots\b\c\b\c\a \quad\text{ or }\quad \cdots \c\b\c\b\a.
         \]
         Similarly, there are two possibilities of $\lambda_1$ restricted on $\overline{\lambda_1\Delta\lambda_3}$:
         \[
            \b\a\c\a\c\cdots \quad\text{ or }\quad \b\c\a\c\a\cdots.
         \]
         In order to satisfy both $\overline{\lambda_1\Delta\lambda_2} \cap \overline{\lambda_1\Delta\lambda_3} \ne \varnothing$ and $ {(\lambda_1\Delta\lambda_2)} \cap  {(\lambda_1\Delta\lambda_3)} =\varnothing$, the only possibility is
         \[
            \cdots \c\b\c\b\c\a\c\a \cdots,
         \]
         where $\cdots \c\b\c\b$ lies on $\lambda_1\Delta\lambda_2$ and $\c\a\c\a\cdots$ is on $\lambda_1\Delta\lambda_3$. But it contradicts to the condition $(2)$.

         For the remaining part $(3) \Longrightarrow (1)$, one observes that \eqref{eq:l2l3l4} guarantees $\lambda_4$ is obtained from $\lambda_1$ by performing two inversion maps given by two edges $\{\lambda_1,\lambda_2,p\}$ and $\{\lambda_1,\lambda_3,q\}$. Since each inversion does not change the endpoints of $\overline{\lambda_1\Delta\lambda_2}$ and $\overline{\lambda_1\Delta\lambda_3}$, $\lambda_4$ is well defined and non-singular, proving that the square is realizable.
    \end{proof}
    The realizable square in Lemma~\ref{lem:squaretype2} is called a \emph{realizable square of type 2} if it is irreducible. A realizable square of $D(P_m)$ is called \emph{of type~1} if it is not of type 2.

    \begin{example}
        Put $p=1$, $q=2$ and let $\lambda_1$, $\lambda_2$, and $\lambda_3$ be characteristic maps over $P_{10}$ represented by
        \begin{align*}
            \lambda_1 &= \a\b \a \b\c\b \a \b \c\b  \\
            \lambda_2 &= \a\b \a \c\b\c \a \b \c\b  \\
            \lambda_3 &= \a\b \a \b\c\b \a \b \a\b.
        \end{align*}
        Then
        \[
            \lambda_4 = \a\b \a \c\b\c \a \b \a\b.
        \]
    \end{example}

    From now on, we study the realizable square in the language of e-sets, starting from the following definition. We fix a characteristic map $\lambda$ over $P_m$.
    \begin{definition}\label{def:type12related}
        When $p \ne q$, two e-sets $(p,S)$ and $(q,T)$ compatible with $\lambda$ are said to be \emph{type~1} or \emph{type~2 $\lambda$-related}, if edges $\{ \lambda, \inv_S \lambda, p\}$ and $\{ \lambda, \inv_T \lambda, q\}$ span a realizable square of type~1 or type~2, respectively.

        When $p=q$, $(p,S)$ and $(q,T)$ compatible with $\lambda$ are always said to be \emph{type~1 $\lambda$-related}.
    \end{definition}

    Now, let us find a combinatorial criterion for two given e-sets to be type~2 $\lambda$-related or not.
    Let $(p,S)$ be a non-empty e-set of $[m]$ and fix a vertex $r\in [m]\setminus S$. Recall Remark~\ref{rem:chessboard} to observe that the set $S$ divides $[m]\setminus S$ to two disjoint subsets $A$ and $B$ corresponding to ``black'' and ``white'' arcs respectively. We can assume that $r\in B$. Then we write $\overline{A} =: \Omega_r(S)$.

    \begin{proposition}\label{prop:type2}
        Assume that we are given two non-empty e-sets $(p,S)$ and $(q,T)$ compatible with $\lambda$. Then $(p,S)$ and $(q,T)$ are type 2 $\lambda$-related if and only if $\lambda(p)\neq \lambda(q)$ and $\Omega_q(S) \cap \Omega_p(T) = \varnothing$.
    \end{proposition}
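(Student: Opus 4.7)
The plan is to translate ``type~2 $\lambda$-related'' into the realizability criterion of Lemma~\ref{lem:squaretype2}(3) and then identify $\overline{\lambda \Delta \inv_S\lambda}$ with $\Omega_q(S)$. A realizable square of type~2 must arise in the setting of Lemma~\ref{lem:squaretype2} (so $\supp_\lambda p \ne \supp_\lambda q$, which in our context means $\lambda(p) \ne \lambda(q)$) and must be irreducible; conversely, when $\lambda(p) = \lambda(q)$, every realizable square spanned by the two edges is of type~1 by the theorem preceding Lemma~\ref{lem:squaretype2}. So I may assume $\lambda(p) = \a$ and $\lambda(q) = \b$ and focus on proving the identity of closures.

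For the identity, since $\lambda(q) = \b \ne \a$ we have $q \in [m] \setminus S$, and by the chessboard convention $q$ lies in the white component $B$, so $\Omega_q(S) = \overline{A}$ where $A$ is the black component. By the definition of $\inv_S$, the maps $\lambda$ and $\inv_S\lambda$ agree on $\lambda^{-1}(\a)$ and on every $\b\c$-piece lying inside $B$, while they swap $\b \leftrightarrow \c$ on every $\b\c$-piece lying inside $A$; hence $\lambda \Delta \inv_S\lambda = A \setminus \lambda^{-1}(\a)$, and the identity reduces to $\overline{A \setminus \lambda^{-1}(\a)} = \overline{A} = A \cup S$. I would establish this by a boundary analysis using non-consecutivity of $\lambda^{-1}(\a)$: for $v \in S$, the $A$-side neighbor of $v$ cannot itself be an $\a$-position (otherwise $v$ would be adjacent to another $\a$-position), so it lies in $A \setminus \lambda^{-1}(\a)$; for $v \in A \cap \lambda^{-1}(\a)$, the neighbors $v \pm 1$ must lie in $A$ (else they would be in $S \subseteq \lambda^{-1}(\a)$, forcing consecutive $\a$-positions) and outside $\lambda^{-1}(\a)$, hence in $A \setminus \lambda^{-1}(\a)$. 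Symmetric reasoning gives $\overline{\lambda \Delta \inv_T\lambda} = \Omega_p(T)$.

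With these identifications, Lemma~\ref{lem:squaretype2}(3) immediately yields that the square is realizable iff $\Omega_q(S) \cap \Omega_p(T) = \varnothing$. For the irreducibility required by type~2, I would argue that non-emptiness of $S$ forces $\inv_S\lambda \ne \lambda$: in every arc of $A$ some vertex must be non-$\a$, since otherwise the $\a$-positions in the arc together with the $\a$-endpoints in $S$ bounding it would give consecutive $\a$-positions, again contradicting non-consecutivity of $\lambda^{-1}(\a)$. Hence the $p$-edge is non-trivial, symmetrically for the $q$-edge, and the realizable square cannot take the reducible form. I expect the main technical obstacle to be the boundary argument underlying $\overline{A \setminus \lambda^{-1}(\a)} = \overline{A}$; once that identity is secured, the remainder is a direct assembly of Lemma~\ref{lem:squaretype2} with the definitions.
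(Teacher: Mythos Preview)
Your approach is essentially the same as the paper's: both reduce the statement to Lemma~\ref{lem:squaretype2}(3) via the identification $\overline{\lambda\Delta\inv_S\lambda} = \Omega_q(S)$ (and symmetrically for $T$). The paper simply asserts this identification with the phrase ``observe that $\overline{\lambda_1\Delta\lambda_2} = \Omega_q(S)$'' and says the ``if'' part ``goes analogously,'' whereas you actually supply the boundary argument for $\overline{A\setminus\lambda^{-1}(\a)} = \overline{A}$ and explicitly verify irreducibility from the non-emptiness of $S$ and $T$; both of these details are correct and fill gaps the paper leaves implicit.
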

    \begin{proof}
        To show the ``only if'' part, let us assume that $\epsilon(\{\lambda_1,\lambda_2,p\}) = (p,S)$ and $\epsilon(\{\lambda_1,\lambda_3,q\}) = (q,T)$. We can further assume that $\lambda_1(p) = \lambda_2(p) = \lambda_3(p)=\a$ and $\lambda_1(q) = \lambda_2(q) = \lambda_3(q) = \b$. Then observe that $\overline{\lambda_1\Delta\lambda_2} = \Omega_q(S)$ and $\overline{\lambda_1\Delta\lambda_3} = \Omega_p(T)$. Then, it immediately follows by Lemma~\ref{lem:squaretype2}.

        To show the ``if'' part, one picks any $\lambda$ such that
        \[\lambda(i)=\left\{
          \begin{array}{ll}
            \a, & \hbox{$i=p$ or $i\in S$;} \\
            \b, & \hbox{$i=q$ or $i\in T$}
          \end{array}
        \right. \]
        and the proof goes analogously.
    \end{proof}
    \begin{remark}\label{rem:type1}
        We remark that an empty e-set $(p, \varnothing)$ is always type~1 $\lambda$-related with arbitrary e-set $(q,T)$ compatible with $\lambda$. One can see that two non-empty e-sets $(p,S)$ and $(q,T)$ compatible with $\lambda$ are type~1 $\lambda$-related if and only if $\lambda(p) = \lambda(q)$.
    \end{remark}

\section{Classification of small covers over $P_m(J)$} \label{sec:classification_over_P_m(J)}
    We are now ready to describe the realizable puzzles over $P_m$. Let us fix $J = (j_1, \dotsc, j_m) \in \Z_+^m$ which is an $m$-tuple of positive integers. We denote by $DJ(m,J)$ the set of pairs $(\lambda,\cE)$ such that $\lambda$ is a node in $D(P_m)$ and $\cE$ is a finite sequence of e-sets of compatible with $\lambda$,
    \[
        (1,S^1_1),\dotsc,(1,S^1_{j_1-1}),(2,S^2_1),\dotsc,(2,S^2_{j_2-1}),\dotsc,(m,S^m_1),\dotsc,(m,S^m_{j_m-1}),
    \]
    such that the members of $\cE$ are pairwise $\lambda$-related.

    \begin{theorem}
        There is a bijection
        \[
            \{\text{small covers over $P_m(J)$ up to D-J equivalence}\} \longleftrightarrow DJ(m,J).
        \]
    \end{theorem}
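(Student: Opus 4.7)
The strategy is to reduce, via Theorem~\ref{thm:realizablepuzzle}, to a bijection between realizable puzzles $\pi\colon G(J)\to D(P_m)$ and elements of $DJ(m,J)$. Fix once and for all a base vertex $v_0=(v_1^0,\dotsc,v_m^0)$ of $G(J)$, where $v_i^0\in\Delta^{j_i-1}$. By Proposition~\ref{prop:standardform}, a realizable puzzle $\pi$ is uniquely determined by $\lambda:=\pi(v_0)$ together with its images on the edges of $G(J)$ incident to $v_0$. For each color $i\in[m]$, exactly $j_i-1$ of these edges are colored $i$, one for every vertex of $\Delta^{j_i-1}$ distinct from $v_i^0$. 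The map $\epsilon$ identifies edges of $D(P_m)$ at $\lambda$ colored $i$ bijectively with e-sets $(i,S)$ compatible with $\lambda$, so the image data under $\pi$ is precisely a sequence of e-sets of the form appearing in $DJ(m,J)$. Call this forward assignment $\Phi$.

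To see that $\Phi$ lands in $DJ(m,J)$, pick two edges incident to $v_0$ of colors $i\ne i'$. Since they lie in different simplicial factors, they are two adjacent sides of a unique subsquare of $G(J)$ (whose fourth vertex differs from $v_0$ only in coordinates $i$ and $i'$). Realizability of $\pi$ forces the image of this subsquare to be a realizable square of $D(P_m)$, which by Definition~\ref{def:type12related} is exactly the condition that the two associated e-sets are $\lambda$-related. For two edges of the same color, type~1 $\lambda$-relatedness holds by definitional convention, so all pairs in $\cE$ are indeed pairwise $\lambda$-related.

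The reverse direction---constructing $\Psi\colon DJ(m,J)\to\{\text{realizable puzzles}\}$---is the main obstacle. Given $(\lambda,\cE)\in DJ(m,J)$, the natural attempt is to set, for every vertex $w=(w_1,\dotsc,w_m)$ of $G(J)$,
\[
    \pi(w)\;=\;\Bigl(\textstyle\prod_{i\,:\,w_i\ne v_i^0}\inv_{S^i_{a_i}}\Bigr)\lambda,
\]
where $(i,S^i_{a_i})\in\cE$ is the e-set indexing $w_i$. Three things must be shown: (i) the product is independent of order up to D-J equivalence; (ii) every intermediate result is a genuine (non-singular) characteristic map; and (iii) every subsquare of $G(J)$, not only those at $v_0$, maps to a realizable square of $D(P_m)$. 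Any subsquare of $G(J)$ lies in a coordinate plane of two colors $i\ne i'$ and has an ``origin'' corner $u$ whose coordinates agree with $v_0$ in both $i$ and $i'$, differing from $v_0$ in some subset of the other colors. Thus the subsquare reduces to two edges of $D(P_m)$ incident to $\pi(u)$ of colors $i$ and $i'$, and realizability boils down to showing that pairwise $\lambda$-relatedness at $v_0$ propagates to pairwise $\pi(u)$-relatedness at $u$. I would prove this by induction on the Hamming distance $|\{i:u_i\ne v_i^0\}|$, verifying that the combinatorial criteria for $\lambda$-relatedness supplied by Lemma~\ref{lem:edge}, Proposition~\ref{prop:type2}, and Remark~\ref{rem:type1}---namely equality of supports for type~1 and the disjointness $\Omega_q(S)\cap\Omega_p(T)=\varnothing$ for type~2---are preserved under one further inversion in an unrelated color. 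Once (i)--(iii) are established, Proposition~\ref{prop:standardform} immediately gives $\Phi\circ\Psi=\mathrm{id}$ and $\Psi\circ\Phi=\mathrm{id}$.
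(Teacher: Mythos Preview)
Your strategy coincides with the paper's: reduce to realizable puzzles via Theorem~\ref{thm:realizablepuzzle}, record a puzzle by its value at a base vertex together with the $\epsilon$-images of the incident edges (Proposition~\ref{prop:standardform}), and rebuild the puzzle by iterating $\inv_S$ while propagating $\lambda$-relatedness along $G(J)$. The forward map $\Phi$ is correct as written.

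There is, however, a genuine gap in your reverse construction. Your assertion that every subsquare of $G(J)$ has an ``origin'' corner $u$ with $u_i=v_i^0$ and $u_{i'}=v_{i'}^0$ is false as soon as some $j_i\ge 3$: the edge of $\Delta^{j_i-1}$ participating in the subsquare need not contain $v_i^0$. For such a subsquare the color-$i$ edges do not carry e-sets from $\cE$ directly; under your formula for $\pi(w)$ they carry the symmetric difference $S^i_a\triangle S^i_b$. Correspondingly, an induction that only treats ``one further inversion in an unrelated color'' is insufficient, because reaching any corner of such a subsquare from $v_0$ forces an inversion in color $i$ or $i'$ itself. The paper closes exactly this gap: it first extends the e-set labeling to \emph{all} edges of $G(J)$ by the rules that parallel edges share an e-set and that around any triangle inside a single factor $\Delta^{j_i-1}$ one has $S_3=(S_1\cup S_2)\setminus(S_1\cap S_2)$; it then proves a transitivity property in which the moving color $r$ is allowed to equal $p$ or $q$, so that if $(p,S),(q,T),(r,U)$ are mutually $\lambda_1$-related and $\lambda_2=\inv_U\lambda_1$, then $(p,S'),(q,T'),(r,U)$ are mutually $\lambda_2$-related with $S'=S\triangle U$ when $p=r$ (and likewise for $T'$). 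The accompanying case analysis---splitting on which of $\lambda_1(p),\lambda_1(q),\lambda_1(r)$ coincide, and in the type~2 situation using the containment $\Omega_q(S\triangle U)\subset\Omega_q(S)\cup\Omega_q(U)$---is the technical core that your sketch omits.
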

    \begin{proof}
        By Theorem~\ref{thm:realizablepuzzle}, we will use the realizable puzzle instead of the D-J class over $P_m(J)$. We label nodes of $G(J)$ in the following way. Let $V(G(J))$ be the node set of $G(J)$. Recall that $G(J)$ is the 1-skeleton of the polytope
        \[
            \Delta^{j_1-1} \times \Delta^{j_2-1} \times \cdots \times \Delta^{j_m-1}.
        \]
        By labeling the vertices of $\Delta^{n-1}$ by $1,2,\dotsc,n$, we may identify $V(G(J)) = I(J)$ where
        \[
            I(J) = \{\balpha = (\alpha_1,\dotsc,\alpha_m )\mid  1 \le \alpha_p \le j_p\text{ for } p=1,\dotsc,m     \}.
        \]
        Let us use the notation $\1 := (1,\dotsc,1)$. A node $\balpha$ adjacent to $\1$ can be written as
        \[
            \balpha(p,\alpha_p) := (1,\dotsc,1,\alpha_p,1,\dotsc, 1),
        \]
        where the $p$th entry is $\alpha_p$, and the other entries are $1$.
        In order to construct a pair $(\lambda,\cE)$ from a realizable puzzle $\pi\colon G(J) \to D(P_m)$, we put $\lambda = \pi(\1)$ and $\cE$ the sequence consisting of $(\epsilon\circ\pi)(\{\1,\balpha(p,\alpha_p)\})$ when $1 \le p \le m$ and $1 \le \alpha_p \le j_p-1$. Then, $(\lambda, \cE)$ is indeed an element of $DJ(m, J)$. In summary, $(\lambda,\cE)$ is determined by $\pi(\1)$ and $\pi(\balpha)$ for all $\balpha$ adjacent to $\1$.

        The converse construction is the essential part of the proof. Our aim here is to construct a realizable puzzle $\pi$ using the data of $(\lambda,\cE)$. The basic philosophy is Proposition~\ref{prop:standardform}. We assign an e-set to each edge of $G(J)$ incident to $\1$ analogously to above argument. This is extended to the whole edges of $G(J)$ using the following rules.
        \begin{enumerate}
            \item Every parallel edge is assigned the same e-set.
            \item For a triangle and the corresponding e-sets $(p,S_i)$, $i=1,2,3$,
                \[
                    S_3 = (S_1 \cup S_2) \setminus (S_1 \cap S_2).
                \]
        \end{enumerate}
        This assignment is indeed well defined and the verification is very easy. Recall that $G(J)$ is an edge-colored graph. If $e$ is an edge of $G(J)$ whose color is $i$, then its assigned e-set is $(i,S)$ for some $S$.
        Now we define a pseudograph homomorphism $\pi\colon G(J) \to D(P_m)$ by the following rules.
        \begin{enumerate}
            \item $\pi(\1) = \lambda$.
            \item If $\balpha$ and $\bbeta$ are adjacent nodes in $G(J)$ and the assigned e-set to the edge $\{\balpha,\bbeta\}$ is $(p,S)$, then
                \[
                    \pi(\bbeta) = \inv_S\pi(\balpha).
                \]
        \end{enumerate}

        We must check that $\pi$ is well defined and is a realizable puzzle. The following diagram
        \[
            \xymatrix{
            \ar@{-}[rd]_{e,(p,S)} & & & \ar@{-}[ld]^{e',(p,S')}\\
            & \lambda_1 \ar@{-}[r]^{g,(r,U)} \ar@{-}[ld]_{f,(q,T)} &\lambda_2 \ar@{-}[rd]^{f',(q,T')} &\\
             & & & }
        \]
        indicates some nodes and edges of $G(J)$, whose two nodes map to $\lambda_1$ and $\lambda_2 = \inv_U \lambda_1$ by $\pi$ respectively. Here, $e,e', f,f',$ and $g$ are edges and the ordered pairs are assigned e-sets. We additionally assume that the $e$ and $e'$ are parallel and thus $S=S'$ if $p \ne r$, and the $f$ and $f'$ are parallel and $T=T'$ if $q \ne r$.

        In order to show both well-definedness and realizablity of $\pi$, we have to show the \emph{transitivity property} of e-sets: if $(p,S)$, $(q,T)$ and $(r,U)$ are mutually $\lambda_1$-related as e-sets compatible with $\lambda_1$, then $(p,S')$, $(q,T')$ and $(r,U)$ are mutually $\lambda_2$-related as e-sets compatible with $\lambda_2$.

        STEP 1. Let us firstly show that $(p,S')$, $(q,T')$ and $(r,U)$ are e-sets compatible with $\lambda_2$. Note that $\supp_{\lambda_1} r = \supp_{\lambda_2} r$ by the definition of $\inv_U$. Since $U \subset \supp_{\lambda_1} r$, we also have $U \subset \supp_{\lambda_2} r$. Therefore, $(r,U)$ is compatible with $\lambda_2$.

        Note that $S'$ is either $S$ or $(S \cup U) \setminus (S \cap U)$. If $\lambda_1(p) =\lambda_1(r)$, then
        $$
        \{p\} \cup S' \subset \{p\} \cup (S \cup U) \subset \supp_{\lambda_1}p = \supp_{\lambda_1}r = \supp_{\lambda_2}r.
        $$
        Since $p \in \supp_{\lambda_2} r$, we have $\supp_{\lambda_2} r = \supp_{\lambda_2} p$, and, thus
        \begin{equation}\label{eqn:lambda(p)=lambda(r)}
          \{p\} \cup S' \subset \supp_{\lambda_2} p.
        \end{equation}
        If $\lambda_1(p) \neq \lambda_1(r)$ (and thus $p \neq r$), then $(p,S)$ and $(r,U)$ must be type~2 $\lambda_1$-related. Hence, $\Omega_p(U) \cap \Omega_r(S) = \varnothing$. Note that since $p \not\in \Omega_p(U)$ and $S \subset \Omega_r(S)$, we have $(\{p \} \cup S) \cap \Omega_p(U) = \varnothing$. Assume that $\lambda_1(p)= \a$ and $\lambda_1(r)= \b$. Then, $\inv_U$ is the map exchanging $\a$ and $\c$ in $\Omega_p(U)$. Therefore,  $\{p \} \cup S \subset \supp_{\lambda_2} p$. Therefore,
        \begin{equation} \label{eqn:lambda(p)ne lambda(r)}
        \{p\} \cup S' = \{p \} \cup S \subset \left( \supp_{\lambda_2} p \cap \supp_{\lambda_1} p \right).
        \end{equation}
        In both cases, $(p,S')$ is compatible with $\lambda_2$.
        Similar arguments can be applied to $(q,T')$.

        STEP 2. Observe that $(r,U)$ and $(p,S')$ are $\lambda_2$-related since the three edges $e$, $e'$, and $g$ determine a realizable square when $p\ne r$, or a triangle when $p=r$. Similarly, $(r,U)$ and $(q,T')$ are $\lambda_2$-related.

        STEP 3. If $U = \varnothing$, then $\lambda_1 = \lambda_2$, $S'=S$ and $T'=T$. Therefore, since $(p,S)$ and $(q,T)$ are $\lambda_1$-related, $(p,S')$ and $(q,T')$ are $\lambda_2$-related. If $S = \varnothing$, then $S'=\varnothing$ or $S' = U$. In the second case, we must have $p=r$ and thus $(p,S') = (r,U)$ and we can use the result of STEP 2. Hence, in any case, $(p,S')$ and $(q,T')$ are $\lambda_2$-related. Similar argument can be applied to the case $T=\varnothing$. Hence, the claim holds when one of $S,T$ and $U$ is an empty set.

        From now on, let us assume that none of $S,T$ and $U$ is an empty set. We may use Proposition~\ref{prop:type2} and Remark~\ref{rem:type1} as criteria whether the e-sets are $\lambda_2$-related in the next Step.

        STEP 4. Let us show that $(p,S')$ and $(q,T')$ are $\lambda_2$-related. We divide this case into a few smaller cases:
       \begin{itemize}
            \item $\lambda_1(p) = \lambda_1(q) = \lambda_1(r)$: By \eqref{eqn:lambda(p)=lambda(r)}, we have
            $$ p \in \supp_{\lambda_2} p = \supp_{\lambda_2} r = \supp_{\lambda_2} q \ni q.$$  Therefore, $(p,S')$ and $(q,T')$ are type~1 $\lambda_2$-related.

            \item $\lambda_1(p) = \lambda_1(q) \ne \lambda_1(r)$ and $\Omega_p(U) = \Omega_q(U)$:
                Note that $\supp_{\lambda_1} p = \supp_{\lambda_1} q$, and observe that
                \begin{align*}
                 p  \in & \supp_{\lambda_2}p \setminus \Omega_p(U)
                   = \supp_{\lambda_1}p \setminus \Omega_p(U) \\
                  & = \supp_{\lambda_1}q \setminus \Omega_q(U)
                   = \supp_{\lambda_2}q \setminus \Omega_q(U) \ni q.
                 \end{align*} Therefore, $\lambda_2(p) =\lambda_2(q)$, and hence, $(p,S')$ and $(q,T')$ are type~1 $\lambda_2$-related.

            \item $\lambda_1(p) = \lambda_1(q) \ne \lambda_1(r)$ and $\Omega_p(U) \ne \Omega_q(U)$:
            Note that $\{U,[m]\setminus \Omega_p(U),[m]\setminus \Omega_q(U)\}$ is a partition of $[m]$. Also observe that
                \[
                    \{p\}\cup \Omega_r(S) \subset [m]\setminus \Omega_p(U)
                \]
                and
                \[
                    \{q\}\cup \Omega_r(T) \subset [m]\setminus \Omega_q(U).
                \]
                Thus $q\notin \Omega_r(S)$ and $\Omega_q(S) = \Omega_r(S)$. Likewise $\Omega_p(T) = \Omega_r(T)$, concluding $\Omega_q(S) \cap \Omega_p(T) = \varnothing$.
                Since $S'=S$ and $T'=T$,
                $(p,S')$ and $(q,T')$ are type~2 $\lambda_2$-related.

            \item $\lambda_1(p) \ne \lambda_1(q)$:
            If $\lambda_2(p)=\lambda_2(q)$, then $(p,S')$ and $(q,T')$ are type~1 $\lambda_2$-related.
            Now let us consider the case where $\lambda_2(p)\neq \lambda_2(q)$.
            In this case, $(p,S)$ and $(q,T)$ are type~2 $\lambda_1$-related.
                Hence, $\Omega_p(T) \cap \Omega_q(S) = \varnothing$.
                If $r\neq p$ and $r\neq q$, then $S=S'$ and $T=T'$. Therefore, $(p,S')$ and $(q,T')$ are type~2 $\lambda_2$-related.
                If $p=r$, then $S' =(S \cup U) \setminus (S \cap U)$, and $T'=T$ because $q\neq r$. Since $(q,T)$ and $(r,U)$ must be type~2 $\lambda_1$-related, we have $\Omega_p(T) \cap \Omega_q(U) = \Omega_r(T) \cap \Omega_q(U) =\varnothing$.
                To show $\Omega_p(T') \cap \Omega_q(S') = \varnothing$, we are enough to prove that
                \begin{equation}\label{eq:omegasu}
                    \Omega_q (S') \subset \Omega_q(S) \cup \Omega_q (U).
                \end{equation}
                To show \eqref{eq:omegasu}, pick any element $x \in [m] \setminus (S \cup U)$ reminding that the right hand side contains $S \cup U$. Then $[m] \setminus\{x,q\}$ is the disjoint union of two consecutive sets $A$ and $B$. Now $x$ is an element of $\Omega_q(S')$ if and only if $|S' \cap A|$ is odd. Thus one of $|S \cap A|$ or $|U \cap A|$ must be odd, and $x \in \Omega_q(S) \cup \Omega_q (U)$.  In conclusion, $(p,S')$ and $(q,T')$ are type~2 $\lambda_2$-related.
        \end{itemize}
        The above Steps 1--4 prove our claim.

        Let us finish our goal using the above claim we just have shown. Pick a node $\balpha = (\alpha_1,\dotsc,\alpha_m )\in I(J)$ of $G(J)$ and consider the minimal path from $\1$ to $\balpha$ given by the sequence
        \[
            \1=(1,\dotsc,1),\,(\alpha_1,1,\dotsc,1),\,(\alpha_1,\alpha_2,1,\dotsc,1),\dotsc,(\alpha_1,\dotsc,\alpha_m )=\balpha.
        \]
        Then $\pi(\balpha)$ is given by a chain of $\inv$ maps
        \[
            \pi(\balpha) = \inv_{S_m}  \inv_{S_{m-1}}  \dotsb \inv_{S_2}  \inv_{S_1} \lambda
        \]
        for the e-sets $(1,S_1),\dotsc,(m,S_m)$ corresponding to the above sequence. To check that this is well defined, we start from the node $\1$. The edge connecting $\1$ and $(\alpha_1,1,\dotsc,1)$ and that connecting $\1$ and $(1,\alpha_2,1,\dotsc,1)$ spans a realizable square since the e-sets $(1,S_1)$ and $(2,S_2)$ are $\lambda$-related. Therefore $\inv_{S_2}  \inv_{S_1} \lambda$ is well defined. Next, we move on the node $(\alpha_1,1,\dotsc,1)$. Then the claim shows that $(2,S_2)$ and $(3,S_3)$ are $\pi(\alpha_1,1,\dotsc,1)$-related and thus $\inv_{S_3}\inv_{S_2}  \inv_{S_1} \lambda$ is well defined. The inductive application of this procedure proves that $\pi$ is well defined for every node of $G(J)$. Similarly, our claim shows that every subsquare is realizable, completing the proof.
    \end{proof}

    For a D-J class $\lambda$ over $P_m$, put
    $$
        E(\lambda,J) = \{ \cE \mid (\lambda, \cE) \in DJ(m,J) \}.
    $$

    \begin{corollary}\label{cor:elj}
      The number of small covers over $P_m(J)$ is equal to the sum of $|E(\lambda,J)|$ for all D-J classes $\lambda$ over $P_m$.
    \end{corollary}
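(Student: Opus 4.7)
The plan is to deduce the corollary as an immediate consequence of the bijection established in the preceding theorem. Recall that the theorem provides a bijection between small covers over $P_m(J)$ up to D-J equivalence and the set $DJ(m,J)$ of pairs $(\lambda,\cE)$, where $\lambda$ is a D-J class over $P_m$ and $\cE$ is a sequence of pairwise $\lambda$-related e-sets of the prescribed shape. Therefore it suffices to count $|DJ(m,J)|$.

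The key observation is that $DJ(m,J)$ decomposes naturally according to the first coordinate $\lambda$. Specifically, I would write
\[
    DJ(m,J) = \bigsqcup_{\lambda} \bigl(\{\lambda\} \times E(\lambda, J)\bigr),
\]
where the disjoint union runs over all D-J classes $\lambda$ over $P_m$, and $E(\lambda,J)$ is by definition the set of sequences $\cE$ such that $(\lambda,\cE) \in DJ(m,J)$. This decomposition is well defined because each pair $(\lambda,\cE)\in DJ(m,J)$ has a unique first coordinate, and the fibres of the projection $(\lambda,\cE)\mapsto\lambda$ are in natural bijection with $E(\lambda,J)$.

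Taking cardinalities yields
\[
    |DJ(m,J)| = \sum_{\lambda} |E(\lambda,J)|,
\]
and combining this with the bijection of the previous theorem proves the claim. The proof is therefore essentially bookkeeping, and I do not anticipate any obstacle beyond invoking the theorem and unwinding the definition of $E(\lambda,J)$.
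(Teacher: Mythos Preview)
Your proposal is correct and matches the paper's approach exactly: the corollary is stated immediately after the theorem with no separate proof, since it follows at once from the bijection and the definition of $E(\lambda,J)$ as the fibre of $DJ(m,J)$ over $\lambda$.
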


    Some examples and calculations for Corollary~\ref{cor:elj} will be given in Section~\ref{sec:summary}.

\section{Real toric manifolds over $P_m(J)$} \label{sec:real_toric_manifold_over_P_m(J)}
    The objective of this section is to specify every real toric manifold over the wedged polygon in terms of realizable puzzles. In \cite{CP_CP_variety}, the authors have described all smooth complete toric varieties over wedged polygons $P_m(J)$ up to D-J equivalence. By taking the mod 2 reduction, one can obtain the classification of real toric manifolds over wedged polygons up to D-J equivalence. Before that we need some preperation. We identify $[m]$ with the vertex set of $P_m$ as before. Let us start with the real toric manifolds over $P_m$.
    \begin{lemma}\label{lem:realtoricoverpolygon}
         Let $\lambda\colon [m] \to \Z_2^2$ be a $\Z_2$-characteristic map over $P_m$. The small cover given by $\lambda$, written as $M(\lambda)$, is also a real toric manifold if and only if
         \begin{enumerate}
            \item $\lambda$ is D-J equivalent to $\begin{pmatrix} \a & \b & \a  & \b \end{pmatrix}$, or
            \item the image of $\lambda$ has exactly three elements.
         \end{enumerate}
          In other words, $M(\lambda)$ is a real toric manifold unless $\lambda$ is D-J equivalent to
          \[
                \Lambda_{2k} := \begin{pmatrix} \a & \b & \a  & \b & \cdots & \a & \b \end{pmatrix}_{2 \times 2k},
          \]
          where $k\ge 3$.
    \end{lemma}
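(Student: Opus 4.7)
I would recast the statement as a lifting problem: $M(\lambda)$ is a real toric manifold exactly when $\lambda$ can be lifted to an integral characteristic map $\Lambda\colon[m]\to\Z^2$ with $\det(\Lambda(i),\Lambda(i+1))=\pm 1$ for all cyclic $i$. The main tool is the classical structure theorem for smooth complete toric surfaces: any such surface with $m\ge 5$ rays is obtained from a smooth complete toric surface with $m-1$ rays by an equivariant blow-up at a torus-fixed point, which inserts a new ray $v_i = v_{i-1}+v_{i+1}$ between two adjacent existing rays. Reducing mod $2$, such an insertion is compatible with $\lambda$ exactly at a ``blow-down triple'' $(\lambda(i-1),\lambda(i),\lambda(i+1))$ consisting of three distinct letters, since $\a+\b+\c=0$ in $\Z_2^2$ forces $\lambda(i)=\lambda(i-1)+\lambda(i+1)$ in that case.

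For the impossibility direction, suppose $\Lambda_{2k}$ with $k\ge 3$ admits a lift. Since $m=2k\ge 5$, the associated surface is a nontrivial iterated blow-up, so some vertex $i$ is blow-downable and, in particular, $\lambda(i-1),\lambda(i),\lambda(i+1)$ are three distinct mod-$2$ vectors. But $\Lambda_{2k}$ uses only the two letters $\a$ and $\b$, so no cyclic triple is of that form. Contradiction.

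For the existence direction, I would induct on $m$. The base case $m=3$ (unique pattern $\a\b\c$) is realized by $\CP^2$, and $m=4$ is handled directly: $\a\b\a\b$ is realized by $\CP^1\times\CP^1$ and the three-letter patterns by Hirzebruch surfaces $F_n$ for odd $n$ (together with appropriate re-labelings). For $m\ge 5$ with $\lambda$ using all three letters, some cyclic triple must consist of three distinct letters, since otherwise $\lambda(i-1)=\lambda(i+1)$ for every $i$ and $\lambda$ would alternate between two letters only. Contracting the middle of such a triple produces $\lambda'$ over $P_{m-1}$, and by induction $\lambda'$ lifts to $\Lambda'$; inserting the ray $\Lambda'(i-1)+\Lambda'(i+1)$ between the corresponding rays of $\Lambda'$ then reinflates to a lift of $\lambda$.

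The technical obstacle is ensuring the contracted $\lambda'$ still satisfies the inductive hypothesis, i.e., that $\lambda'$ has three letters or $\lambda'\cong\Lambda_4$, rather than landing on $\Lambda_{m-1}$ with $m-1\ge 6$ (precisely the family we have ruled out). The pathological case is when some letter (say $\c$) appears only once in $\lambda$, so that $\lambda$ is $\Lambda_{m-1}$ with a single inserted $\c$; the fix is to contract one of the two neighbors of the unique $\c$ rather than the $\c$ itself, which preserves the three-letter property for $m\ge 6$ and descends to the base case $\lambda'\cong\Lambda_4$ exactly when $m=5$.
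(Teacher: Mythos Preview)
Your argument is correct and matches the paper's approach closely: both directions rest on the blow-up/blow-down structure of smooth complete toric surfaces, and in the inductive step both handle the obstruction ``one letter appears only once'' by contracting a neighbor of that letter rather than the letter itself. One harmless slip: for $m=5$ your neighbor-contraction actually produces a three-letter word over $P_4$, not $\Lambda_4$, but either way you land in a base case and the induction closes.
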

    \begin{proof}
        First of all, recall the classical fact that every complete smooth toric surface is either $\CP^2$ or a consecutive equivariant blow-up of a Hirzebruch surface. Then their mod 2 reductions cannot be D-J equivalent to $\Lambda_{2k}$ for $k\ge 3$. More precisely, we can define a \emph{blow-up} of a $\Z_2$-characteristic map $\begin{pmatrix} \v_1 & \cdots  & \v_m \end{pmatrix}$ over $P_m$ to be
        \[
            \begin{pmatrix} \v_1 & \cdots & \v_i &  \v_i + \v_{i+1} & \v_{i+1}  & \cdots & \v_m \end{pmatrix}
        \]
        for $i=1,\dotsc, m-1$, or
        \[
            \begin{pmatrix} \v_1 & \cdots & \v_m & \v_m+\v_1 \end{pmatrix}
        \]
        for $i=m$, which is a $\Z_2$-characteristic map over $P_{m+1}$. Observe that $\Lambda_{2k}$ is not a blow-up of other $\Z_2$-characteristic map, and $\Lambda_{2k}$ gives a real toric manifold if and only if $k=2$.

        Conversely, we are going to show that every $\lambda\colon [m] \to \Z_2^2$ with three values gives a real toric manifold. One can assume that $m \ge 5$ and we use an induction on $m$.  Note that there is an inverse operation of the blow-up for $\lambda$, called a \emph{blow-down}, if there is a vertex $p$ of $P_m$ such that $\lambda(p)$, $\lambda(p+1)$, and $\lambda(p+2)$ are all distinct, when $p$, $p+1$, and $p+2$ are considered modulo $m$. When $\lambda$ is viewed as a circular sequence, it should have a subsequence $\a\c\b$ up to D-J equivalence, i,e, $\lambda(p)=\a$, $\lambda(p+1)=\c$, and $\lambda(p+2) = \b$. If $|\lambda^{-1}(\c)| \ge 2$, then the blow-down obtained by deleting $\c$ in the subsequence also has three values. If $p+1$ is the only one with the value $\c$, then we have $\lambda(p-1) = \b$ for $p-1 \ne p+2 \mod m$ and we have a subsequence $\b\a\c\b$. Therefore there is an blow-down obtained by deleting $\a$. Since $|\lambda^{-1}(\a)| \ge 2$, the blow-down still has three values. By induction, $\lambda$ is obtained by consecutive blow-ups from a $\Z_2$-characteristic map over $P_4$ and the proof is complete.
    \end{proof}

    The next step is for real toric manifolds over $\wed_pP_m$. The following is from \cite{CP_CP_variety}.

    \begin{proposition}\cite[Proposition~3.1]{CP_CP_variety}\label{prop:shift} Let $\Sigma_1$ and $\Sigma_2$ be two complete non-singular fans with $m$ rays in $\R^2$ and the matrix
        \[
            \lambda = \begin{pmatrix}
                1 & 0   & x_3   & x_4 & \cdots & x_m \\
                0 & 1   & y_3   & y_4 &  \cdots & y_m
            \end{pmatrix}
        \]
        a characteristic matrix for $\Sigma_1$. Suppose that the two fans are 1-adjacent in the diagram $D^{\text{toric}}(P_m)$ for toric manifolds. Then either of the following holds:
        \begin{enumerate}
            \item two fans are the same.
            \item two fans share a ray generated by $\binom{-1}0$.
        \end{enumerate}
        In the second case, a characteristic matrix for $\Sigma_2$ can be written as the following:
        \[
            \zeta_e := \begin{pmatrix}
                1 & 0 & x_3 & \cdots & x_{\ell-1} & x_\ell = -1 & x_{\ell+1}+e  & x_{\ell+2} - y_{\ell+2}e & \cdots & x_m - y_m e \\
                0 & 1 & y_3 & \cdots & y_{\ell-1} & y_\ell = 0  & y_{\ell+1}=-1 & y_{\ell+2}               & \cdots & y_m
            \end{pmatrix}
        \]
        for some $e$ and $\ell$. Conversely, for every $e\in\Z$, $\zeta_e$ is 1-adjacent to $\lambda$ whenever $\lambda$ contains a ray generated by $\binom{-1}0$.
    \end{proposition}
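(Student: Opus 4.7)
The plan is to parameterize the three-dimensional fan $\Lambda$ over $\wed_1 P_m$ whose projections recover $\Sigma_1$ and $\Sigma_2$ by the standard form for an edge, translate non-singularity into a linear system on the ``third row'' of $\Lambda$, and extract the classification from that system.

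First, since $\Sigma_1$ and $\Sigma_2$ are $1$-adjacent, \cite[Lemma~4.2]{CP_wedge_2} applied to $\Z$-characteristic maps produces the standard form
\[
    \Lambda = \begin{pmatrix}
        1 & 0 & 0 & x_3 & \cdots & x_m \\
        0 & 0 & 1 & y_3 & \cdots & y_m \\
        1 & 1 & e_2 & e_3 & \cdots & e_m
    \end{pmatrix},
\]
with columns labelled $1_1, 1_2, 2, 3, \dotsc, m$, so that $\proj_{1_2}\Lambda = \lambda$ and $\proj_{1_1}\Lambda$ represents $\Sigma_2$. A row operation (adding $-e_2$ times row $2$ to row $3$) normalizes $e_2 = 0$ while substituting $e_i \mapsto e_i - e_2 y_i$ for $i\ge 3$. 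Next, I enumerate the facets of $\wed_1 P_m$ --- they are $\{1_1, 1_2, 2\}$, $\{1_1, 1_2, m\}$, and $\{1_\epsilon, i, i+1\}$ for $\epsilon \in \{1, 2\}$ and $i=2,\dotsc,m-1$ --- and impose that each corresponding $3 \times 3$ minor equals $\pm 1$. The facets containing $1_2$ merely reproduce the non-singularity of $\Sigma_1$; the facets containing $1_1$, after using $x_i y_{i+1} - x_{i+1} y_i = 1$, reduce to
\[
    y_i e_{i+1} - y_{i+1} e_i \in \{0, -2\} \qquad (i = 2, \dotsc, m-1).
\]

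A direct computation of $\proj_{1_1}\Lambda$ (row-reduce against $\lambda(1_1)$ and restore standard form via a $GL_2(\Z)$ transformation) shows that the $i$th ray of $\Sigma_2$ is $\binom{e_i - x_i}{y_i}$ for $i \ge 3$, with vertices $1$ and $2$ fixed at $\binom{1}{0}$ and $\binom{0}{1}$. The boundary edges $\{1,2\}$ and $\{m,1\}$ of $\Sigma_2$ automatically carry determinant $+1$, so the fan $\Sigma_2$ must be CCW-oriented, and the remaining determinants, equal to $-(y_i e_{i+1} - y_{i+1} e_i) - 1$, are all forced to equal $+1$; this rules out the ``$0$'' branch of the dichotomy and leaves $y_i e_{i+1} - y_{i+1} e_i = -2$ for every $i$. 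Substituting $f_i := e_i - 2 x_i$ converts this to the homogeneous relation $y_i f_{i+1} - y_{i+1} f_i = 0$ with initial data $(y_2, f_2) = (1,0)$, i.e., the planar integer vectors $(y_i, f_i)$ are pairwise parallel.

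Consequently $f_i = 0$ holds throughout any range of indices on which the parallel direction is preserved, and the direction can change only at an index $\ell$ where $(y_\ell, f_\ell) = (0, 0)$. Since $y_\ell = 0$ combined with the non-singularity of $\Sigma_1$ at the edge $\{\ell-1,\ell\}$ forces $(x_\ell, y_\ell) = (\pm 1, 0)$, and the wedge vertex already realizes $(1, 0)$, such an $\ell \ne 1$ can exist only when $\lambda$ contains a ray generated by $\binom{-1}{0}$, at a unique position $\ell$. If no such $\ell$ occurs, then $f_i = 0$ for all $i$ and $\Sigma_2 = \Sigma_1$. If such an $\ell$ occurs, then $f_i = 0$ for $i \le \ell$ while for $i > \ell$ the restarted direction $(y_{\ell+1}, f_{\ell+1}) = (-1, e)$, with the free parameter $e := f_{\ell+1} \in \Z$, propagates as $f_i = -y_i e$; unwinding the substitution recovers precisely the matrix $\zeta_e$ of the statement. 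The converse is a direct check that the candidate $e_i = 2 x_i$ for $i \le \ell$ and $e_i = 2 x_i - y_i e$ for $i > \ell$ satisfies $y_i e_{i+1} - y_{i+1} e_i = -2$ throughout, so $\Lambda$ is non-singular and $\zeta_e$ is $1$-adjacent to $\lambda$. The main anticipated difficulty is the sign-bookkeeping: eliminating the mixed case in $\{0, -2\}$ via the orientation of $\Sigma_2$, and then arguing carefully that the chain of pairwise parallel vectors $(y_i, f_i)$ admits at most one break, which must occur at the unique non-wedge ray of $\Sigma_1$ lying on the $x$-axis.
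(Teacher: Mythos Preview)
First, note that the paper does not supply its own proof of this proposition: it is quoted verbatim from \cite[Proposition~3.1]{CP_CP_variety} and used as a black box. So there is no in-paper argument to compare your attempt against, and I can only assess the proposal on its own merits.

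Your forward argument is correct and well organized. Writing $\Lambda$ in the standard form for an edge, reading off the minor condition $y_i e_{i+1} - y_{i+1} e_i \in \{0,-2\}$, and then using that the projected matrix must itself represent a complete (hence consistently oriented) fan to discard the $0$-branch is exactly the right mechanism. The substitution $f_i = e_i - 2x_i$, reducing the problem to a chain of pairwise parallel vectors $(y_i,f_i)$ with at most one break at the unique index where $\Sigma_1$ meets $(-1,0)$, is clean and recovers $\zeta_e$ on the nose.

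The converse, however, has a genuine gap. You verify that your candidate $\Lambda$ has all facet minors equal to $\pm 1$, but $1$-adjacency in $D^{\text{toric}}(P_m)$ demands more: $\Lambda$ must define a complete non-singular \emph{fan} in $\R^3$, not merely a non-singular $\Z$-characteristic map. Non-singularity alone does not guarantee that the maximal cones tile $\R^3$ without overlap. To close the gap you should either (i) observe that $\zeta_e$ is itself a complete non-singular fan --- it is obtained from $\Sigma_1$ by applying the shear $\bigl(\begin{smallmatrix}1 & -e\\ 0 & 1\end{smallmatrix}\bigr)$ to the rays in the open lower half-plane, which fixes the boundary rays $(\pm 1,0)$ --- and then invoke the result from \cite{Choi-Park2016} that a characteristic map over a simplicial wedge is fan-giving precisely when both of its projections are; or (ii) argue directly that the specific $\Lambda$ you produced spans a fan over $\wed_1 P_m$. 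Either route is short, but one of them must appear before you can conclude that $\zeta_e$ is $1$-adjacent to $\lambda$.
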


    An edge $\{\lambda_1,\lambda_2,p\}$ in $D(P_m)$ is said to be \emph{real toric} if either $\lambda_1 = \lambda_2$ or the following holds:
    \begin{enumerate}
        \item there exists $q\in [m]$ so that $\lambda_2 = \inv_{\{p,q\}} \lambda_1$, and
        \item each piece of $\lambda_1$ determined by $\{p,q\}$ does not contain $\lambda_1(p)$ or contains all three values.
    \end{enumerate}
    {An edge satisfying above $(1)$ can be represented by the set $\{\lambda_1,(p,q)\}$, or a circular sequence consisting of $\a$, $\b$, and $\c$ whose two points at $p$ and $q$ are marked with different marks $()$ and $\{\}$ respectively. For example, consider the edge $\{\lambda_1,\lambda_2,1\}$ where
    \begin{align*}
        \lambda_1 &= \a \b\c\b \a \c \a \b  \text{ and} \\
        \lambda_2 &= \a \b\c\b \a \b \a \c.
    \end{align*}
    It can be represented by $\{\a \b\c\b \a \c \a \b,(1,5)\}$ or $(\a) \b\c\b \{\a\} \c \a \b$. It is real toric because $\b\c\b$ does not contain $\a$ and $\c\a\b$ contains all three values.
    On the other hand, the edge $ (\a) \b\c \{\a\} \b \a \b $  is not real toric  since $\b\a\b$ contains $\a$ and has only two values.
    }
    \begin{lemma}\label{lem:realtoricedge}
         An edge in $D(P_m)$ gives a real toric manifold over $\wed_pP_m$ for some $p\in [m]$ if and only if it is real toric.
    \end{lemma}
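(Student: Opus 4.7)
The plan is to reduce the question to Proposition~\ref{prop:shift} for 1-adjacent toric fans and analyze the mod-2 reduction. The only-if direction follows from the structure theorem; the if-direction requires constructing an integer lift of a real toric edge.

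For the only-if direction, suppose the edge $\{\lambda_1,\lambda_2,p\}$ is the mod-2 reduction of a smooth complete toric fan $\Sigma$ over $\wed_pP_m$. Its two projections $\Sigma_1,\Sigma_2$ are $1$-adjacent toric fans over $P_m$, so by Proposition~\ref{prop:shift}, either $\Sigma_1=\Sigma_2$ or $\Sigma_2=\zeta_e$ for some integer $e$ and position $\ell$. The first case and the case $e$ even reduce mod $2$ to the trivial edge, which is real toric. For $e$ odd, set $q=\ell$ and observe that reducing $\zeta_e$ modulo $2$ flips the first coordinate exactly at those positions $j\in\{\ell+1,\ldots,m\}$ with $y_j$ odd, i.e.\ at the $\b$ and $\c$ positions of $\lambda_1$ on one arc; this is precisely the operation $\inv_{\{p,q\}}$, giving condition (1).

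The verification of condition (2) is the main technical step. Consider the sub-fan of $\Sigma_1$ from $u_p=\binom{1}{0}$ to $u_q=\binom{-1}{0}$ through one half-plane. Assume for contradiction that some interior ray has mod-2 reduction $\a$ but $\c$ is absent from the arc (the symmetric case with $\b$ absent is analogous). Then the mod-2 reductions of the interior rays alternate $\a,\b,\a,\b,\ldots$, and the smooth-fan recurrence $u_{i+1}=a_iu_i-u_{i-1}$ forces every $a_i$ in the arc to be even. Setting $y_i=\det(u_i,u_q)$, the same recurrence yields $y_{i+1}=a_iy_i-y_{i-1}$ with boundary $y_1=0$, $y_2=1$, $y_q=0$. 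The negative continued-fraction expansion $0=[a_2,a_3,\ldots]$ then forces $a_2=0$, which immediately gives $u_3=-u_1=u_q$; hence the arc has no interior $\a$ at all, a contradiction. This establishes condition (2).

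For the if-direction, given a real toric edge we must construct a smooth complete toric fan over $\wed_pP_m$ whose mod-$2$ reduction realizes it. In the trivial case $\lambda_1=\lambda_2$, with the implicit assumption that $\lambda_1$ realizes a real toric manifold over $P_m$ (required by Lemma~\ref{lem:realtoricoverpolygon} since projections of a real toric manifold are real toric), lift $\lambda_1$ to a smooth toric fan $\Sigma_1$ and take $\Sigma_1=\Sigma_2$ in Proposition~\ref{prop:shift}(1). In the non-trivial case, with $\lambda_1(p)=\lambda_1(q)=\a$ and condition (2), we build a smooth toric fan $\Sigma_1$ over $P_m$ realizing $\lambda_1$ with $u_p=\binom{1}{0}$ and $u_q=\binom{-1}{0}$ by lifting each arc piece separately. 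A piece avoiding $\a$ is alternating $\b\c$ of some length $s$ and lifts explicitly to the rays $\binom{-k}{1}$ for $k=0,\ldots,s$ followed by $\binom{-1}{0}$; a piece containing all three values is lifted by an inductive construction choosing the $a_i$'s with the correct parities to realize the prescribed mod-$2$ pattern while preserving the determinant-$\pm 1$ adjacency condition. Setting $\Sigma_2=\zeta_1$ in Proposition~\ref{prop:shift}(2) then yields the required integer characteristic matrix over $\wed_pP_m$, whose mod-$2$ reduction realizes the edge.

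The main obstacle is the realization step for arcs containing all three values in the if-direction. The key observation making the induction work is that at each stage one has enough freedom in the choice of the parity of $a_i$ to hit the next prescribed mod-$2$ value while keeping the ray in the correct half-plane and forming a basis with the previous one; carrying out the bookkeeping, rather than any deep new idea, is the difficulty.
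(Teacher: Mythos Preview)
Your overall structure---reducing both directions to Proposition~\ref{prop:shift}---matches the paper, but the arguments you supply at the two key technical points diverge from the paper's and are not complete as written.

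For condition~(2) in the only-if direction, the paper does not run a continued-fraction computation. Instead it observes that a smooth half-fan from $\binom{1}{0}$ to $\binom{-1}{0}$ whose mod-2 reduction is $\a\b\a\b\cdots\b\a$ can be reflected across the $x$-axis to produce a smooth \emph{complete} fan whose mod-2 reduction uses only two values; as soon as there is an interior $\a$ this has at least six rays and contradicts Lemma~\ref{lem:realtoricoverpolygon}. Your recurrence argument is aiming at the same conclusion, but the assertion that ``$0=[a_2,a_3,\ldots]$ with all $a_i$ even forces $a_2=0$'' is stated without proof; you still need to rule out solutions with all $a_i$ even and all intermediate $y_i$ positive, and this is not visibly easier than the one-line reflection trick.

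The more serious gap is in the if-direction. Your description is a forward greedy choice of the $a_i$: fix the parity to hit the next prescribed mod-2 value, and choose the magnitude to keep $u_{i+1}$ in the open upper half-plane. This produces $u_2,\dotsc,u_{k-1}$, but the terminal requirement $u_k=\binom{-1}{0}$ is a genuine constraint---equivalently, the second coordinate of $u_{k-1}$ must be exactly $1$ and the first coordinate must then match---and nothing in the greedy procedure guarantees it. Calling this ``bookkeeping'' understates the difficulty. The paper sidesteps the closing-up problem entirely by arguing backwards: it shows that any marked circular sequence satisfying (1) and (2) can be reduced by blow-downs on unmarked vertices, preserving both conditions, down to one of the base cases $(\a)\b\{\a\}\b$ or $(\a)\b\{\a\}\c$. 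These manifestly lift, and reversing the blow-downs as toric blow-ups produces the required fan over $\wed_pP_m$. That inductive reduction is the substantive content of the paper's proof and is exactly what your sketch is missing.

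Your parenthetical about the trivial-edge case is a fair observation: the definition declares every trivial edge real toric, yet a trivial edge at $\lambda_1=\Lambda_{2k}$ with $k\ge 3$ cannot lift. The paper's proof of the if-direction in fact only treats edges satisfying (1) and (2), tacitly leaving the trivial case to Lemma~\ref{lem:realtoricoverpolygon}; this does not affect the downstream application to realizable puzzles.
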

    \begin{proof}
        Let us denote by $\lambda_\R$ the mod 2 reduction of a characteristic map $\lambda \colon [m] \to \Z^n$.
        Let us deal with ``If'' part first. That is, we are given an edge satisfying $(1)$ and $(2)$. 
        We start from the circular sequence $(\a)\b\{\a\}\b$  or $(\a)\b\{\a\}\c$ and apply consecutive blow-ups to finally obtain any edge satisfying $(1)$ and (2). Observe that every edge obtained in this way corresponds to a real toric manifold over $\wed_pP_m$ by Proposition~\ref{prop:shift}. Or, equivalently, one starts from a circular sequence with two different marks satisfying (1) and (2), and performs blow-downs conserving two marks to end with $(\a)\b\{\a\}\b$  or $(\a)\b\{\a\}\c$. First, let us consider the case that each piece of $\lambda_1$ determined by $\{p,q\}$ does not contain $\a$. Then each piece consists of $\b$ and $\c$ and for example it will look like
        \[
            (\a) \b \c \b \dotsb \c \b \c \{\a\},
        \]
        together with the two marked $\a$'s. Then one can repeat blow-downs until the piece has length 1, obtaining $(\a)\b\{\a\}$ or $(\a)\c\{\a\}$. When a piece contains $\lambda_1(t)=\a$, watch for its two neighbors at $t-1$ and $t+1 \mod m$. If they are different to each other, then we can remove $\a$ at $t$ by blow-down. If not, then without loss of generality $\lambda_1(t-1) = \lambda_1(t+1) = \b$. Then one searches for $\c$ at $t-2$ and $t+2$ $\mod m$ in the piece and at $t-3$ and $t+3$ and so on. Since the piece contains $\c$ by condition (2), one can eventually find $\c$ in the piece. For example, the following piece
        \[
           \b \underline{\a} \b \a \b \dotsb \a \b \c,
        \]
        where the underlined $\a$ is at the position $t$, can be reduced to
        \[
            \b \underline{\a} \c
        \]
        by consecutive blow-downs. Finally we can remove $\a$ on the underline.

        For ``only if'' part, the condition (1) is a direct consequence of Proposition~\ref{prop:shift}. Suppose that the condition (2) does not hold. Then we have a non-singular fan $\Sigma$ in $\R^2$ spanning the upper half-plane whose mod 2 reduction has the form $\a \b \a \b \dotsb \a \b \a$ up to D-J equivalence. Then by reflection of $\Sigma$ across the $x$-axis, we obtain a complete non-singular fan whose mod 2 reduction has only two values $\a$ and $\b$. This is a contradiction with Lemma~\ref{lem:realtoricoverpolygon} and the proof is done.
    \end{proof}

    Let $J=(j_1,\dotsc,j_m)\in \Z_+^m$ be an $m$-tuple of positive integer.
    \begin{theorem}
        A realizable puzzle $\pi \colon G(J) \to D(P_m)$ corresponds to a real toric manifold if and only if either every node of $G(J)$ maps to the same node of $D(P_m)$, or there is a quadruple $(p,q,\lambda_1,\lambda_2)$ such that
        \begin{enumerate}
            \item $p,q \in [m]$ and $\lambda_2 = \inv_{\{p,q\}} \lambda_1$,
            \item the edges $\{\lambda_1,\lambda_2,p\}$ and $\{\lambda_1,\lambda_2,q\}$ are real toric, and
            \item every nontrivial edge of $G(J)$ maps to either the edge $\{\lambda_1,\lambda_2,p\}$ or $\{\lambda_1,\lambda_2,q\}$.
        \end{enumerate}
        In the latter case, every irreducible realizable square has the form
        \begin{equation*}
        \xymatrix{
                \lambda_{1} \ar@{-}[rr]^p \ar@{-}[dd]^q  & & \lambda_{2} \ar@{-}[dd]^q \\
                  &   &  \\
                \lambda_{2}\ar@{-}[rr]^p &   & \lambda_{1} }.
        \end{equation*}
    \end{theorem}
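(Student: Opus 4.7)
The plan is to derive the theorem from the classification of smooth complete toric varieties over $P_m(J)$ in \cite{CP_CP_variety} via mod 2 reduction, combined with Lemmas~\ref{lem:realtoricoverpolygon} and \ref{lem:realtoricedge}, which handle the unwedged case and the single wedge respectively. The strategy is to lift the $\Z_2$-puzzle $\pi$ to an integral toric puzzle $\tilde\pi\colon G(J)\to D^{\text{toric}}(P_m)$ whenever the small cover is real toric, and to exploit the rigidity of $1$-adjacencies in $D^{\text{toric}}(P_m)$ afforded by Proposition~\ref{prop:shift}.

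For the ``if'' direction I would proceed constructively. If $\pi$ is constant, a direct verification shows that the associated D-J class over $P_m(J)$ admits a toric lift, using the extra flexibility provided by wedging to build a smooth complete fan whose mod 2 reduction is the given characteristic function. In the non-constant case, Lemma~\ref{lem:realtoricedge} lifts each of the edges $\{\lambda_1,\lambda_2,p\}$ and $\{\lambda_1,\lambda_2,q\}$ to $p$- and $q$-adjacencies in $D^{\text{toric}}(P_m)$ between two smooth complete fans $\Sigma_1$ and $\Sigma_2$; compatibility of the two lifts (so that the \emph{same} $\Sigma_2$ works for both) follows from the shared distinguished ray $\binom{-1}{0}$ and the parameter freedom in Proposition~\ref{prop:shift}. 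Because $\inv_{\{p,q\}}$ is an involution, these two integral edges span a realizable toric square whose fourth vertex is $\Sigma_1$ again. Propagating this square by parallel copies across $G(J)$ produces an integral lift of $\pi$, exhibiting a smooth projective toric manifold whose real locus is the given small cover. The prescribed form of each irreducible realizable square is then forced by the mod 2 reduction of this rigid toric structure.

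For the ``only if'' direction, let $\pi$ correspond to a real toric manifold $M = X^\R$ for some toric manifold $X$ over $P_m(J)$. Restricting $\pi$ along a single edge of $G(J)$ yields a puzzle over $\wed_iP_m$ whose D-J class is a characteristic submanifold of $M$, hence real toric; by Lemma~\ref{lem:realtoricedge} every nontrivial edge image is real toric, giving condition~(2). The toric lift $X$ provides an integral puzzle $\tilde\pi$ reducing mod 2 to $\pi$. By Proposition~\ref{prop:shift}, a nontrivial $1$-adjacency from any fixed fan $\Sigma_1$ requires the distinguished ray generated by $\binom{-1}{0}$ and determines the neighbor up to a single integer parameter. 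When both a $p$- and a $q$-adjacency emanate from $\Sigma_1$, the standard form for a square together with the uniqueness in Proposition~\ref{prop:standardform} forces the fourth vertex of that square to coincide with $\Sigma_1$, which mod 2 reads $\lambda_2 = \inv_{\{p,q\}}\lambda_1$. Propagating this through every square of $G(J)$ confines the image of $\pi$ to the two nodes $\{\lambda_1,\lambda_2\}$ together with the two specified nontrivial edges.

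The main obstacle is the rigidity propagation in the ``only if'' direction: one must show that the distinguished ray $\binom{-1}{0}$ persists as one moves from square to square across $G(J)$, so that no third node $\lambda_3$ can enter the image. The key technical inputs are Proposition~\ref{prop:shift} which rigidifies each individual $1$-adjacency, Proposition~\ref{prop:standardform} which makes the fourth vertex of each realizable square unique, and the involutive nature of $\inv_{\{p,q\}}$ which closes each square back to $\Sigma_1$. Together, these collapse the image onto at most two nodes and two edges, yielding the stated dichotomy.
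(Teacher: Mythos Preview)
Your overall strategy---reduce to the toric classification in \cite{CP_CP_variety} and then pass to mod~2 via Lemma~\ref{lem:realtoricedge}---is exactly the paper's approach. The paper, however, does this in one line: it invokes Proposition~3.2 of \cite{CP_CP_variety}, which already describes the realizable \emph{squares} of $D^{\text{toric}}(P_m)$ and hence the full puzzle structure, and then applies Lemma~\ref{lem:realtoricedge} to translate the edge condition into the ``real toric'' condition defined just before that lemma.

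Where your proposal drifts is in trying to rebuild the content of Proposition~3.2 from the weaker Proposition~\ref{prop:shift} (which only constrains \emph{edges}, not squares). In your ``only if'' direction you assert that ``the standard form for a square together with the uniqueness in Proposition~\ref{prop:standardform} forces the fourth vertex of that square to coincide with $\Sigma_1$.'' Uniqueness tells you the fourth vertex is determined, not that it equals $\Sigma_1$; showing it \emph{is} $\Sigma_1$ (and that the $p$- and $q$-neighbors of $\Sigma_1$ have the same mod~2 reduction $\lambda_2$) is precisely the computation carried out in Proposition~3.2 of \cite{CP_CP_variety}. Your appeal to ``the involutive nature of $\inv_{\{p,q\}}$'' is a mod~2 statement and does not by itself close the integral square. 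Similarly, in the ``if'' direction the compatibility of the two integral lifts (``so that the same $\Sigma_2$ works for both'') and the constant-puzzle case are asserted rather than argued.

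In short: your plan is correct, but you should cite Proposition~3.2 of \cite{CP_CP_variety} directly rather than attempt to re-derive it from Proposition~\ref{prop:shift}; the latter route is where your gaps lie.
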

    \begin{proof}
        The proof is an immediate consequence of Proposition~3.2 of \cite{CP_CP_variety} and Lemma~\ref{lem:realtoricedge}.
    \end{proof}

\section{Summary}\label{sec:summary}
    This self-contained section is the summary for this article. We describe our classification in terms of combinatorial language and provide several counting examples. We shall introduce again all notions required to understand the main results.

    Let us fix a positive integer $m \geq 3$. For a finite set $S$, a \emph{weak partition} $\{S_1, \ldots, S_k\}$ of $S$ is a partition of $S$ such that some of $S_i$ could be empty. A subset $I$ of $[m]=\{1,2,\ldots,m\}$ is said to be \emph{non-consecutive} if $\{ p, p+1 \mod m\}$ are not contained in $I$ for any $p \in [m]$.

    We consider a weak partition $\lambda = \{ \mu_\a , \mu_\b, \mu_\c \}$ of $[m]$ such that all of $\mu_\a , \mu_\b,$ and $\mu_\c$ are non-consecutive. We call $\lambda$ a \emph{D-J class over $P_m$}.
    Two elements $p$ and $q$ of $[m]$ are said to be \emph{$\lambda$-equivalent} if $\{p,q\} \subset \mu_i$ for some $i$.
    \begin{remark}
        It is convenient to describe a D-J class over $P_m$ using a word consisting of alphabets $\a$, $\b$, and $\c$. For example, the D-J class $\{\{1,3\},\{2\},\{4\}\}$ can be written as $\a\b\a\c$ or $\b\c\b\a$; the representation is unique up to permutation of $\a$, $\b$, and $\c$.
    \end{remark}

    \begin{definition}
        For a D-J class $\lambda = \{ \mu_\a, \mu_\b, \mu_\c\}$ over $P_m$, an ordered pair $(p,S)$ is called an \emph{e-set} (compatible with $\lambda$) if it satisfies the following
        \begin{enumerate}
            \item $p \in [m]$,
            \item $S \subset [m]$ is of even cardinality, and
            \item all elements of $\{p\}\cup S$ are mutually $\lambda$-equivalent, that is, $\{p\}\cup S \subset \mu_i$ for some $i$.
        \end{enumerate}

        In particular $(p, \varnothing)$ is always an e-set for any $\lambda$. 
    \end{definition}

    Let $(p,S)$ be an e-set of $[m]$ and fix a vertex $r\in [m]\setminus S$.
    Note that $S$ divides $[m]\setminus S$ to two disjoint subsets $A$ and $B$ corresponding to ``black'' and ``white'' arcs respectively. We can assume that $r\in B$. Then we write $\Omega_r(S):=A \cup S$.

    \begin{definition}
        Let $(p,S)$ and $(q,T)$ be two e-sets compatible with $\lambda$.
        \begin{enumerate}
            \item Two e-sets $(p,S)$ and $(q,T)$ of $[m]$ are type~1 $\lambda$-related if and only if  either one of $S$ and $T$ is empty, or $p$ and $q$ are $\lambda$-equivalent.
            \item Two e-sets $(p,S)$ and $(q,T)$ of $[m]$ are type~2 $\lambda$-related if and only if $S$ and $T$ are nonempty, $p$ and $q$ are not $\lambda$-equivalent, and $\Omega_q(S) \cap \Omega_p(T) = \varnothing$.
        \end{enumerate}
        In addition, $(p,S)$ and $(q,T)$ are said to be \emph{$\lambda$-related} if they are either type~1 $\lambda$-related or type~2 $\lambda$-related.
    \end{definition}

    For a D-J class $\lambda$ over $P_m$ and a positive integer $m$-tuple $J=(j_1, \ldots, j_m)$, we denote by $E(\lambda,J)$ the set of finite sequences $\cE$ of e-sets of $[m]$ compatible with $\lambda$
    \[
        (1,S^1_1),\dotsc,(1,S^1_{j_1-1}),(2,S^2_1),\dotsc,(2,S^2_{j_2-1}),\dotsc,(m,S^m_1),\dotsc,(m,S^m_{j_m-1})
    \]
    such that the members of $\cE$ are mutually $\lambda$-related.

    \begin{theorem}
        The number of small covers over $P_m(J)$ up to D-J equivalence is
        \[
            \sum_\lambda{|E(\lambda,J)|},
        \]
        where $\lambda$ runs through all D-J classes over $P_m$.%
    \end{theorem}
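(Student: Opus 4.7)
The plan is to observe that this theorem is a reformulation of Corollary~\ref{cor:elj}, recast in the purely combinatorial language introduced in this summary section. Accordingly, the proof reduces to verifying that the notions redefined here (D-J class over $P_m$, e-set, $\lambda$-equivalence, type~1 and type~2 $\lambda$-relation) coincide with those established in Sections~\ref{sec:background}--\ref{sec:classification_over_P_m(J)}, after which the counting formula follows by partitioning $DJ(m,J)$ according to its first coordinate.

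First I would match the definitions. A D-J class over $P_m$ was shown in Section~\ref{sec:diagram_of_P_m} to be the same as a weak partition $\{\mu_\a,\mu_\b,\mu_\c\}$ of $[m]$ into three non-consecutive subsets, so the definition given here in Section~\ref{sec:summary} is verbatim the earlier one. For a D-J class $\lambda$, two vertices $p,q \in [m]$ are $\lambda$-equivalent in the sense of this section if and only if $\supp_\lambda p = \supp_\lambda q$, which is exactly the condition $\lambda(p) = \lambda(q)$ used earlier. Consequently, the notion of e-set compatible with $\lambda$ defined here (requiring $\{p\}\cup S$ to be $\lambda$-equivalent and $|S|$ even) is literally the notion of e-set from Section~\ref{sec:diagram_of_P_m}, since the condition $S \subset \supp_\lambda p$ is the condition $\{p\} \cup S \subset \mu_i$ for the appropriate $i$.

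Next, I would verify that the type~1 and type~2 $\lambda$-relations defined here match Definition~\ref{def:type12related}. The type~2 case is directly Proposition~\ref{prop:type2}: the criterion $\lambda(p) \ne \lambda(q)$ (i.e., $p$ and $q$ are not $\lambda$-equivalent), together with $\Omega_q(S) \cap \Omega_p(T) = \varnothing$ and $S,T$ nonempty, is precisely the criterion proved there. The type~1 case is covered by Remark~\ref{rem:type1}: an empty e-set is always type~1 $\lambda$-related to any other e-set, and nonempty e-sets $(p,S), (q,T)$ are type~1 $\lambda$-related exactly when $\lambda(p) = \lambda(q)$, i.e., when $p$ and $q$ are $\lambda$-equivalent. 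Unifying these into the notion of $\lambda$-related e-sets yields exactly the notion used in the definition of $DJ(m,J)$.

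With the dictionary established, the set $E(\lambda,J)$ defined in this section coincides with the one defined before Corollary~\ref{cor:elj}, and so
\[
    DJ(m,J) = \bigsqcup_{\lambda} \bigl(\{\lambda\} \times E(\lambda,J)\bigr),
\]
where $\lambda$ ranges over all D-J classes over $P_m$. By the main theorem of Section~\ref{sec:classification_over_P_m(J)}, $|DJ(m,J)|$ equals the number of small covers over $P_m(J)$ up to D-J equivalence, and this equals $\sum_\lambda |E(\lambda,J)|$. The only nontrivial part of the argument is checking the definitional correspondence, in particular the combinatorial reformulation of the type~2 relation via $\Omega$-sets; this is the step that requires genuine content from Proposition~\ref{prop:type2}, but it has already been carried out in Section~\ref{sec:diagram_of_P_m}, so nothing further needs to be proved here.
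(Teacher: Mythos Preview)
Your proposal is correct and matches the paper's approach: the theorem in Section~\ref{sec:summary} is stated without proof precisely because it is a restatement of Corollary~\ref{cor:elj} in self-contained combinatorial language, and your argument---verifying the dictionary between the redefined notions and those of Sections~\ref{sec:diagram_of_P_m}--\ref{sec:classification_over_P_m(J)}, then partitioning $DJ(m,J)$ by its first coordinate---is exactly the implicit reasoning the summary section relies on.
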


For a given $\lambda$, it should be interesting to consider a subset $\tilde{E}(\lambda, J)$ of $E(\lambda,J)$, which is the set of $\cE$ such that its e-sets are mutually type~1 $\lambda$-related.

\begin{proposition}\label{prop:tilde_E_formula}
    Let $\lambda = \{ \mu_\a, \mu_\b, \mu_\c \}$ be a D-J class over $P_m$, and $J=(j_1, \ldots, j_m) \in \Z_+^m$. Then,
$$
    |\tilde{E}(\lambda,J)|=(2^{|\mu_a|-1})^{w_\a} + (2^{|\mu_b|-1})^{w_\b} + (2^{|\mu_c|-1})^{w_\c} - 2,
$$where $w_i = \left\{
                 \begin{array}{ll}
                   0, & \hbox{ if $\mu_i=\varnothing$;} \\
                   \sum_{ k \in \mu_i } (j_k-1), & \hbox{otherwise.}
                 \end{array}
               \right.
$.
\end{proposition}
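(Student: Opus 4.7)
The plan is to analyze type~1 $\lambda$-relatedness at the level of single pairs, observe that it forces a very simple block structure on any pairwise-related sequence, and then finish by a three-term inclusion-exclusion.

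First, I would unpack the definition: two e-sets $(p,S)$ and $(q,T)$ compatible with $\lambda$ are type~1 $\lambda$-related precisely when either $S = \varnothing$ or $T = \varnothing$, or else $p$ and $q$ lie in a common block of $\lambda$. Consequently, in any $\cE \in \tilde{E}(\lambda, J)$ all of its non-empty e-sets must have their first components in one and the same block $\mu_i$. For each $i \in \{\a,\b,\c\}$, let
\[
    A_i = \{\cE \in \tilde{E}(\lambda, J) \mid \text{every non-empty e-set of $\cE$ has first component in $\mu_i$}\},
\]
so that $\tilde{E}(\lambda, J) = A_\a \cup A_\b \cup A_\c$.

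Next I would count each $|A_i|$ directly. A position of the form $(k, S^k_j)$ with $k \in \mu_i$ can be filled by any even-cardinality subset $S^k_j$ of $\supp_\lambda k = \mu_i$, and there are $2^{|\mu_i|-1}$ such subsets (counting $\varnothing$). A position with $k \notin \mu_i$ must be filled by $\varnothing$. Multiplying over the $w_i$ positions with first component in $\mu_i$ yields $|A_i| = (2^{|\mu_i|-1})^{w_i}$, where the convention $w_i = 0$ when $\mu_i = \varnothing$ makes this factor equal $1$ in the degenerate case. For $i \neq j$, any $\cE \in A_i \cap A_j$ forces every e-set to be empty, so $|A_i \cap A_j| = 1$, and similarly $|A_\a \cap A_\b \cap A_\c| = 1$.

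Inclusion-exclusion then gives
\[
    |\tilde{E}(\lambda,J)| = \sum_{i \in \{\a,\b,\c\}} (2^{|\mu_i|-1})^{w_i} - 3 + 1 = \sum_{i \in \{\a,\b,\c\}} (2^{|\mu_i|-1})^{w_i} - 2,
\]
which is the claimed formula. There is no substantial obstacle here; the single point worth flagging is the empty-block case $\mu_i = \varnothing$, which the stated convention on $w_i$ absorbs cleanly.
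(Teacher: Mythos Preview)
Your proof is correct and follows essentially the same approach as the paper: both observe that all non-empty e-sets in a type~1 sequence must have their first components in a single block $\mu_i$, count $(2^{|\mu_i|-1})^{w_i}$ sequences for each block, and then correct for the all-empty sequence being overcounted. Your explicit use of inclusion-exclusion makes the final ``$-2$'' more transparent than the paper's terse ``together with the case that every $S$ is an empty set,'' but the underlying argument is the same.
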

\begin{proof}
Note that the number of even-subsets of $\mu_i$ is $2^{|\mu_i|-1}$.
For each $p \in \mu_i$, we have to choose $(p,S)$ such that $S$ is an even-subset of $\mu_i$.
One remarks that if $p \in \mu_i$ and $(p,S) \in \cE$ with a non-empty set $S$, then $(q,T) \in \cE$ implies that either $q \in \mu_i$ or $T =\varnothing$. Therefore, there are $(2^{|\mu_i|-1})^{w_i}$ cases. Together with the case that every $S$ is an empty set, the proposition is proved.
\end{proof}

\begin{example}[Example~2.9 of \cite{Choi2008}]\label{example:m=4}
    Assume that $m=4$ and $J=(j_1,j_2,j_3,j_4)$. There are only three D-J classes $\lambda_1 = \a\b\a\b$, $\lambda_2=\a\b\a\c$, and $\lambda_3=\a\b\c\b$. For each $\lambda_i$, $\lambda_i$-related $(p, S)$ and $(q,T)$ are always of type~1, that is, $E(\lambda_i,J) = \tilde{E}(\lambda_i,J)$. By Proposition~\ref{prop:tilde_E_formula}, we have
    \begin{align*}
    |\tilde{E}(\lambda_1,J)| &= 2^{j_1 + j_3-2} + 2^{j_2+j_4 -2} -1, \\
    |\tilde{E}(\lambda_2,J)| &= 2^{j_1 + j_3-2}, \text{ and } \\
    |\tilde{E}(\lambda_3,J)| &= 2^{j_2 + j_4-2}.
    \end{align*}
    Thus, the number of D-J classes over $P_4(J)$  is
    $$
        2^{j_1 + j_3-1} + 2^{j_2 + j_4-1} -1.
    $$
\end{example}

\begin{example}[Theorem~8.3 of \cite{Choi-Park2016}]
    Assume that $m=5$ and $J=(j_1,\ldots,j_5)$. Then, there are five D-J classes over $P_5$. Put $$\lambda_i \equiv \{ \{i\}, \{i+1, i+3\}, \{i+2, i+4\} \} \text{ mod $5$}.$$
    Similarly to Example~\ref{example:m=4}, for each $\lambda_i$, one can see that $E(\lambda_i, J) = \tilde{E}(\lambda_i,J)$, and we have
    $|\tilde{E}(\lambda_i,J)|=2^{j_{i+1} + j_{i+3}-2} + 2^{j_{i+2} + j_{i+4}-2}-1$ where all indices are given by modulo $5$. Therefore, the number of D-J classes over $P_5(J)$ is
    $$
    2^{j_1 + j_3-1} + 2^{j_2 + j_4-1} + 2^{j_3 + j_5-1} + 2^{j_4 + j_1-1} + 2^{j_5 + j_2-1} -5.
    $$
\end{example}

\begin{example}
    Assume that $m=6$ and $J=(j_1,\ldots,j_6)$. There are $11$ D-J classes over $P_6$. In order to compute the number of D-J classes over $P_6(J)$, since we can compute $|\tilde{E}(\lambda,J)|$ easily due to Proposition~\ref{prop:tilde_E_formula}, it is enough to consider $E(\lambda,J) \setminus \tilde{E}(\lambda,J)$.
We note that there are only a few possible pairs of $(p,S)$ which can be type 2 related:
$$
    (i+2 \pm1 ,\{i+1,i+3\}), (i+5 \pm1,\{i+4,i+6\}) \quad \text{for $i=1,2,3$}.
$$
Here is the list of all type~2 $\lambda$-related pairs for each $\lambda$.

\begin{tabular}{|c|c|}
  \hline
  Type~2 $\lambda$-related pair & corresponding $\lambda$ \\ \hline
  $(2 \pm1 ,\{1,3\}), (5 \pm1,\{4,6\})$ & $\a\b\a\b\a\b$, $\a\b\a\b\c\b$, $\a\c\a\b\a\b$, $\a\c\a\b\c\b$ \\   \hline
  $(4 \pm1 ,\{3,5\}), (1 \pm1,\{6,2\})$ & $\a\b\a\b\a\b$, $\a\b\a\c\a\b$, $\c\b\a\b\a\b$, $\c\b\a\c\a\b$ \\   \hline
  $(6 \pm1 ,\{5,1\}), (3 \pm1,\{2,4\})$ & $\a\b\a\b\a\b$, $\a\b\a\b\a\c$, $\a\b\c\b\a\b$, $\a\b\c\b\a\c$ \\
  \hline
\end{tabular}

Hence, one can see that the sum of $|E(\lambda,J) \setminus \tilde{E}(\lambda,J)|$ for all D-J classes $\lambda$ over $P_6$ is
\begin{align*}
   4 \times ((2^{j_1+j_3-2}-1)(2^{j_4+j_6-2}-1) + (2^{j_3+j_5-2}-1)(2^{j_6+j_2-2}-1)  \\+ (2^{j_5+j_1-2}-1)(2^{j_2+j_4-2}-1)).
 \end{align*}

\end{example}

Real toric manifolds over $P_m(J)$ are small covers and there is an analogue of $E(\lambda,J)$ for real toric manifolds which is a subset of $E(\lambda,J)$.
Let us fix a D-J class $\lambda = \{\mu_\a, \mu_\b,\mu_\c\}$ over $P_m$ and pick two distinct points $p,\,q \in [m]$ which are $\lambda$-equivalent. Then we can assume that $p,q \in \mu_\a$. The points $p$ and $q$ divide $[m] \setminus \{p,q\}$ to two disjoint sets $A_1$ and $A_2$ in the way explained above. In this settings, we say $p$ and $q$ \emph{satisfies property RT with $\lambda$} if $\mu_\a \cap A_i = \varnothing$ or $\mu_\a \cap A_i,\, \mu_\b\cap A_i$, and $\mu_\c\cap A_i$ are all nonempty, for each $i=1,2$.

\begin{definition}
    Let $\lambda$ be a D-J class over $P_m$ for $m\ge 4$. We denote by $E_{\text{RT}}(\lambda,J)$ the set of $\cE \in E(\lambda,J)$ satisfying the following; for such a sequence $\cE$, one can take two elements $p,q \in [m]$ satisfying property RT with $\lambda$ such that every entry of $\cE$ is $(i,\varnothing)$, $(p,\{p,q\})$, or $(q,\{p,q\})$  for $1\le i \le m$. When $\lambda = \{\{1\},\{2\},\{3\}\}$ is the unique D-J class over $P_3$, we define $E_{\text{RT}}(\lambda,J)$ to be the singleton whose unique element is $\cE$ so that every entry of $\cE$ is $(i,\varnothing)$ for $1\le i \le m$.
\end{definition}

\begin{remark}
 For $k \ge 2$, we have a family of D-J classes $\Lambda_{2k} = \{\{1,3,\dotsc,2k-1\},\{2,4,\dotsc,2k\},\varnothing \}$.
    When $\lambda$ is a D-J class over $m \ge 3$, $E_{\text{RT}}(\lambda,J) = \varnothing$ if and only if $\lambda = \Lambda_{2k}$ for $k\ge 3$. In fact, there is no pair $p$ and $q$ satisfying property RT with $\Lambda_{2k}$ for $k\ge 3$.
\end{remark}

    \begin{theorem}
        The number of real toric manifolds over $P_m(J)$ is
        \[
            \sum_\lambda{|E_{\text{RT}}(\lambda,J)|},
        \]
        where $\lambda$ runs through all D-J classes over $P_m$.
    \end{theorem}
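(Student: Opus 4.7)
The plan is to apply the small cover classification theorem to convert the count of real toric manifolds over $P_m(J)$ into a count of pairs $(\lambda, \cE) \in DJ(m,J)$, and then invoke the real toric puzzle theorem (the main theorem of Section~\ref{sec:real_toric_manifold_over_P_m(J)}) to single out exactly the pairs that arise from real toric manifolds. Concretely, I will establish a bijection between real toric manifolds over $P_m(J)$ and the disjoint union $\bigsqcup_\lambda E_{\text{RT}}(\lambda,J)$, identifying each real toric manifold with a pair $(\lambda,\cE)$ where $\lambda=\pi(\1)$ and the e-sets of $\cE$ encode $\pi$ on the edges of $G(J)$ incident to $\1$.

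First, I will analyze the two cases of the real toric puzzle theorem at the level of e-sets. In the constant case (every node of $G(J)$ maps to a single node $\lambda$), every edge of $G(J)$ is mapped to a trivial edge of $D(P_m)$, so every entry of $\cE$ has the form $(i,\varnothing)$; moreover this puzzle corresponds to a real toric manifold only when $M(\lambda)$ is itself a real toric manifold, i.e., $\lambda\ne\Lambda_{2k}$ for $k\ge 3$, by Lemma~\ref{lem:realtoricoverpolygon}. In the non-constant case, the image of $\pi$ consists of two nodes $\lambda_1,\lambda_2=\inv_{\{p,q\}}\lambda_1$ joined by two real toric edges colored $p$ and $q$, so upon setting $\lambda=\lambda_1$ each entry of $\cE$ must be one of $(i,\varnothing)$, $(p,\{p,q\})$, or $(q,\{p,q\})$, with at least one of the latter two actually appearing.

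Next, I will translate the hypothesis that both edges $\{\lambda_1,\lambda_2,p\}$ and $\{\lambda_1,\lambda_2,q\}$ are real toric into the combinatorial condition property RT. By Lemma~\ref{lem:realtoricedge}, these edges come from real toric manifolds iff (assuming $\lambda_1(p)=\lambda_1(q)=\a$) each piece of $\lambda_1$ determined by $\{p,q\}$ either avoids $\lambda_1(p)=\a$ (i.e.\ misses $\mu_\a$) or contains all three letters $\a,\b,\c$. Unwinding the partition $\{\mu_\a,\mu_\b,\mu_\c\}$ description of $\lambda$, this is precisely the statement that $p,q$ are $\lambda$-equivalent and satisfy property RT with $\lambda$. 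This identification is the main bookkeeping step. Combined with the constant case, the resulting pairs $(\lambda,\cE)$ are exactly the elements of $\bigsqcup_\lambda E_{\text{RT}}(\lambda,J)$, and the counting formula follows.

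The main obstacle is this last translation step: one must carefully match the geometric real-toric-edge criterion (formulated in terms of pieces of a circular sequence) with the set-theoretic condition defining property RT, and check that the choice of the distinguished pair $(p,q)$ in the definition of $E_{\text{RT}}$ is compatible with the non-constant case of the real toric puzzle theorem even when multiple valid pairs exist. The boundary cases are easy to dispatch: $m=3$ is handled by the singleton convention in the definition of $E_{\text{RT}}$ (and $M(P_3)$ is always a real toric manifold), while the excluded classes $\Lambda_{2k}$ with $k\ge 3$ admit no RT pair because $\mu_\c=\varnothing$ precludes a piece from realizing all three colors, so $E_{\text{RT}}(\Lambda_{2k},J)=\varnothing$ matches Lemma~\ref{lem:realtoricoverpolygon}.
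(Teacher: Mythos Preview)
Your proposal is correct and follows the route the paper intends: the counting theorem in the Summary section is not given an independent proof in the paper, but is meant to be read off from the bijection $\{\text{D-J classes over }P_m(J)\}\leftrightarrow DJ(m,J)$ of Section~\ref{sec:classification_over_P_m(J)} together with the real-toric puzzle characterization of Section~\ref{sec:real_toric_manifold_over_P_m(J)}, and that is precisely what you do. Your identification of the real-toric-edge condition from Lemma~\ref{lem:realtoricedge} with property~RT is the exact translation the paper has in mind, and your handling of the boundary cases ($m=3$ and $\lambda=\Lambda_{2k}$ for $k\ge 3$) matches the Remark following the definition of $E_{\text{RT}}(\lambda,J)$.

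One point worth making explicit: you invoke Lemma~\ref{lem:realtoricoverpolygon} to exclude the constant puzzle when $\lambda=\Lambda_{2k}$, $k\ge 3$, whereas the theorem in Section~\ref{sec:real_toric_manifold_over_P_m(J)} as literally stated puts no restriction on the constant case. Your version is the correct one (the projection of a real toric manifold onto a characteristic submanifold is again real toric, so a constant puzzle with value $\Lambda_{2k}$ cannot be real toric), and it is what is needed for the count to agree with $\sum_\lambda|E_{\text{RT}}(\lambda,J)|$; you might want to say this one sentence of justification rather than leaving it implicit.
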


\begin{remark}
    For any D-J class $\lambda$ over $P_m$ and $J \in \Z_+^m$, we have the inclusions
    \[
           E_{\text{RT}}(\lambda,J) \subset \tilde{E}(\lambda, J) \subset E(\lambda, J).
    \]
    When $m \le 5$, the two inclusions are actually equalities (see Chapter~8 of \cite{Choi-Park2016}). But in general, almost all inclusions are strict. Indeed, they are strict if $m\ge 6$ and $J \ne (1,\dotsc,1)$.
\end{remark}

\bigskip

\bibliographystyle{amsplain}

%
%
%
%
%
%
%
%
%
%
%
%
%


\end{document}